\documentclass[12pt]{amsart}

\usepackage[english]{babel}
\usepackage{microtype}

\usepackage{amsmath,amssymb,amsfonts,amsthm,mathtools}
\usepackage{booktabs}

\usepackage{xcolor}
\usepackage{todonotes}
\usepackage[colorlinks,linkcolor=blue,anchorcolor=blue,citecolor=blue,backref=page]{hyperref}
\hypersetup{breaklinks=true,bookmarksdepth=2}

\usepackage[norefs,nocites]{refcheck}

\allowdisplaybreaks

\theoremstyle{plain}
\newtheorem{theorem}{Theorem}[section]
\newtheorem{proposition}[theorem]{Proposition}
\newtheorem{lemma}[theorem]{Lemma}

\numberwithin{equation}{section}

\DeclareMathOperator{\Kl}{Kl}
\newcommand{\e}{\mathrm e}
\newcommand{\eps}{\varepsilon}
\newcommand{\om}{\omega}
\newcommand{\one}{\mathbf 1}
\newcommand{\calL}{\mathcal L}
\newcommand{\calR}{\mathcal R}
\newcommand{\calT}{\mathcal T}
\newcommand{\abs}[1]{\left|#1\right|}
\newcommand{\norm}[1]{\left\lVert#1\right\rVert}

\title[Selberg sieve weights and Kloosterman sign changes, II]
{Selberg sieve weights and sign changes of Kloosterman sums. II.
Square-free moduli with at most four prime factors}

\author{Yixiu Xiao}
\address{School of Mathematical Sciences, Shanghai Jiao Tong University,
800 Dongchuan Road, Shanghai 200240, China}
\email{yixiuxiao98@gmail.com}

\author{Hongze Li}
\address{School of Mathematical Sciences, Shanghai Jiao Tong University,
800 Dongchuan Road, Shanghai 200240, China}
\email{lihz@sjtu.edu.cn}
\thanks{Hongze Li is the corresponding author.}

\subjclass[2020]{Primary 11L05; Secondary 11N36}
\keywords{Kloosterman sums, sign changes, square-free almost primes,
Selberg sieve, mean-square estimates}

\begin{document}

\begin{abstract}
Let \(\Kl(1;q)\) denote the normalized Kloosterman sum modulo \(q\).
We prove that, for each sign, there are \(\gg X/\log X\) square-free
moduli \(q\in(X,2X]\) with at most four prime factors for which
\(\Kl(1;q)\) has that sign.  The proof combines the Selberg sieve with
equidistribution and mean-square estimates for Kloosterman sums; the
uniform estimate for shifted sieve weights established in Part~I
controls the contribution of moduli having small prime factors.
\end{abstract}

\maketitle
\setcounter{tocdepth}{2}
\tableofcontents

\section{Introduction}
\subsection{Statement of the result}

For a positive integer \(q\) and an integer \(m\) coprime to \(q\),
define the normalized Kloosterman sum
\[
  \Kl(m;q)
  =\frac{1}{\sqrt q}
    \sum_{\substack{x\bmod q\\(x,q)=1}}
      \e\!\left(\frac{mx+\overline{x}}{q}\right),
  \qquad \e(z)=\e^{2\pi iz},
\]
where \(\overline{x}\) is the multiplicative inverse of \(x\) modulo
\(q\).  The sum is real, and the Weil bound gives
\(\abs{\Kl(1;q)}\leq 2^{\om(q)}\) for square-free \(q\), where
\(\om(q)\) denotes the number of distinct prime factors of \(q\).
For brevity, we call a square-free integer \(q\) with
\(\om(q)\leq k\) a \(P_k\)-modulus.

We study the signs of \(\Kl(1;q)\) as the modulus varies.
Even the existence of infinitely many sign changes over prime moduli
remains open. Fouvry and Michel~\cite{FM} established quantitative
sign changes over $P_{23}$ moduli.
This bound was subsequently reduced to $P_{18}$ by
Sivak-Fischler~\cite{Sivak}, to $P_{15}$ by
Matom\"aki~\cite{Matomaki}, to $P_{10}$ by Xi~\cite{Xi1}, and then
to $P_{7}$ in his subsequent work~\cite{Xi2,XiCorr}.
More recently, Zhang and Zhong~\cite{ZZv2} obtained an unconditional
result for \(P_6\)-moduli, and the first author~\cite{XiaoP5}
reduced this to \(P_5\)-moduli. We reduce the number of permitted
prime factors further to $P_{4}$, retaining the quantitative lower
bound \(\gg X/\log X\) for the number of moduli of each sign in
every sufficiently large dyadic interval \((X,2X]\).

\begin{theorem}\label{thm:main}
For each \(\sigma\in\{+1,-1\}\), there is a constant
\(c_\sigma>0\) such that, for all sufficiently large \(X\),
\[
  \#\left\{
    X<q\leq 2X:
    \mu^2(q)=1,\ \om(q)\leq4,\ 
    \sigma\Kl(1;q)>0
  \right\}
  \geq c_\sigma\frac{X}{\log X}.
\]
\end{theorem}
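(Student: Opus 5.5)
The plan follows the Fouvry--Michel strategy: compare a weighted first moment of \(\Kl(1;q)\) with a weighted sum of \(\abs{\Kl(1;q)}\), using Selberg sieve weights to concentrate on \(P_4\)-moduli. Let \(\lambda_d\) be Selberg weights of level \(D=X^{\vartheta}\), and let \(\Lambda(q)=(\sum_{d\mid q}\lambda_d)^2\ge0\). Restrict to square-free \(q\in(X,2X]\) with no prime factor below \(z=X^{\eta}\). For each sign \(\sigma\) consider
\[
S_\sigma=\sum_{X<q\le2X}\mu^2(q)\Lambda(q)\,\bigl(\abs{\Kl(1;q)}+\sigma\Kl(1;q)\bigr)\,w(q),
\]
where \(w(q)\) is a nonnegative weight that penalises moduli with many prime factors. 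Typically \(w\) has the shape \(1-\sum_{p\mid q}\rho(p)\) (a Richert/Chen-type weight), or a more elaborate combination that is positive only when \(\om(q)\le4\). Every term of \(S_\sigma\) is \(\ll X^{\eps}\) and nonnegative except where \(w<0\). So a lower bound \(S_\sigma\gg X/\log X\), together with an upper bound for the individual terms and the negative contribution \emph{not} swamping it, yields Theorem~\ref{thm:main}. The count follows by Cauchy--Schwarz against the mean square \(\sum\Lambda(q)\abs{\Kl}^2\), or by bounding \(2^{\om(q)}\le 16\) on the surviving set.

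The analytic inputs, in order, are as follows.

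\emph{First moment.} Show \(\sum\Lambda(q)\Kl(1;q)w(q)=o(X/\log X)\) for a level \(\vartheta\) as large as possible. Expand \(\Lambda\), write \(q=dr\), and use the twisted multiplicativity \(\Kl(1;dr)=\Kl(\bar r^2;d)\Kl(\bar d^2;r)\). Then apply the equidistribution/bilinear estimates for Kloosterman sums along the residual variable \(r\): the Fouvry--Michel/Sivak/Matom\"aki/Xi type bounds for \(\sum_r\Kl(a\bar r^2;d)\Kl(\ldots;r)\) with \(r\) almost prime.

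\emph{Lower bound for \(\sum\Lambda\abs{\Kl}w\).} Use \(\abs{\Kl}\ge\Kl^2/2^{\om(q)}\), so that \(\abs{\Kl}\ge \Kl^2/16\) on \(P_4\)-moduli. Handle moduli with more factors through \(w\), and bound \(\abs{\Kl}\) from below via the mean-square asymptotic \(\sum\Lambda(q)\Kl(1;q)^2\sim c\sum\Lambda(q)\), which is itself proved by expanding \(\Kl^2\) and using the same Kloosterman machinery.

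\emph{Small prime factors.} The shifted-weight uniform estimate from Part~I controls the contribution of \(q\) divisible by primes in \([z,D^{1/2}]\) and the shifts \(d\mapsto pd\) that appear when one inserts \(w\).

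The main obstacle is the numerics. One must choose \(\vartheta\), \(\eta\) and the weight \(w\) so that the main term from the mean square, weighted by the Selberg sieve density (computed via the usual \(\sigma\)-function integrals), strictly dominates two things: the loss from moduli with \(\om(q)\ge5\), and the \(2^{\om(q)}\) degradation in \(\abs{\Kl}\ge\Kl^2/2^{\om}\). I would first try the simplest Richert weight \(w(q)=1-\sum_{p\mid q,\,p<X^{1/4}}(1-\log p/\log X^{1/4})\cdot c\), and evaluate the resulting sieve integrals numerically. If the inequality fails narrowly, I would refine \(w\) by separating \(\om(q)=5,6,\dots\) and using sharper upper bounds (the Part~I estimate) for each class, before giving up and optimising the Selberg polynomial.
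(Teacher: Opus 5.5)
\textbf{There is a genuine gap at the analytic core of your plan.}

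\emph{The mean square is not available.} Your lower bound rests on an asymptotic $\sum\Lambda(q)\Kl(1;q)^2\sim c\sum\Lambda(q)$, obtained ``by expanding $\Kl^2$ and using the same Kloosterman machinery''. No such result is available.

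The first-moment cancellation used in Proposition~\ref{prop:signed} is a Bombieri--Vinogradov-type estimate for $\sum_{q\equiv0\,(\ell)}\Kl(1;q)$, going back to the Kuznetsov formula. It is not obtained by factoring $q=dr$ and using bilinear equidistribution in $r$, and it has no analogue for $\Kl(1;q)^2$.

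On products of primes, twisted multiplicativity together with the prime-variable and bilinear estimates gives only the \emph{marginal} laws of the local factors $\mathcal C(p_2\cdots p_i;p_1)$ and $\mathcal C(p_1;p_2\cdots p_i)$; see \eqref{eq:low-first-distribution}--\eqref{eq:low-second-distribution}. Their joint law is unknown, and the mean of $\Kl(1;q)^2$ depends on exactly that. This is why the paper bounds the absolute mass of two- and three-prime moduli from below with Matom\"aki's rearrangement inequality, which needs only marginals (Lemma~\ref{lem:low-absolute-box}), rather than with $\abs{\Kl}\geq\Kl^2/2^{\om(q)}$.

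\emph{No upper bound for moduli with many prime factors.} This is the more serious missing ingredient. You need an upper bound for the absolute Kloosterman mass on moduli with $\om(q)\geq5$. A Richert/Chen weight $w$ that is compatible with the first-moment estimate must be negative on such moduli. So bounding $\sum\Lambda\abs{\Kl}w$ from below forces you to bound $\sum_{w<0}\Lambda\abs{\Kl}\abs{w}$ from above. Saying these moduli are ``handled through $w$'' does not remove that requirement. Your plan offers only $\abs{\Kl}\leq2^{\om(q)}$, which is fatal, or the unavailable mean square.

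\emph{The paper's key idea.} Lemma~\ref{lem:pairing} reserves the largest prime $f=p_r$ and splits the remaining primes alternately into $d$ and $e$. This gives $\abs{\Kl(1;n)}\leq\Xi_d(ef)+\Xi_e(df)$. Each squared trace now lives on a sub-modulus that is small compared with its cofactor, which puts it within range of Xi's Barban--Davenport--Halberstam-type estimate.

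The argument then runs as follows:
\begin{enumerate}
\item Freeze $g$ and $W$ on short prime boxes.
\item Discard the nonregular even boxes.
\item Apply Lemma~\ref{lem:box-discrepancy}, which shows each squared trace has mean $1+o(1)$.
\end{enumerate}
This yields $H_{\geq5}\leq2\sum_{\om(n)\geq5}g(n/X)\mu^2(n)W(n)+o(X/\log X)$.

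Combine this with the total-mass bound $16/3$ (Lemma~\ref{lem:total-sieve-mass}) and the layer constants $B_1,\dots,B_4$. That gives $\mathfrak T<0.28081$, against the lower bound $0.3765$. The plain mass separation of Lemma~\ref{lem:mass} then suffices, with no Richert weight needed.

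The Part~I estimate enters only to remove prime factors below $X^{\eps}$. It does not control shifts coming from a weight $w$.
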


This is the second of two papers on Selberg sieve weights and
Kloosterman sign changes.  Part~I proves the uniform estimate for
shifted sieve weights used below; the statement needed here is recalled
in Lemma~\ref{lem:uniform-selberg}.

\subsection{Main ideas of the proof}

We use a Selberg weight \(W(n)=(\sum_{d\mid n}\lambda_d)^2\) with a
quadratic cutoff profile.  Let \(H_{\leq4}\) and \(H_{\geq5}\) denote
the corresponding absolute Kloosterman mass from integers with at most
four and at least five prime factors, respectively, and let \(S\) be
the signed weighted sum.  If \(M_\sigma\) is the absolute mass carried
by \(P_4\)-moduli of sign \(\sigma\), then the elementary separation in
Lemma~\ref{lem:mass} gives
\begin{equation}
\label{eq:intro-mass-separation}
  M_\sigma
  \geq
  \frac12\left(H_{\leq4}-H_{\geq5}-\abs{S}\right)
  \qquad (\sigma\in\{+1,-1\}).
\end{equation}
By~\eqref{eq:intro-mass-separation}, it is enough to show that \(H_{\leq4}\) exceeds
\(H_{\geq5}\) by a positive multiple of \(X/\log X\), while \(\abs{S}\) is
smaller than this gap.

The lower bound uses only moduli with two or three prime factors.
Equidistribution over the prime factors, combined with
Matom\"aki's rearrangement inequality~\cite{Matomaki}, supplies their
absolute Kloosterman mass.  We use the method of
Fouvry--Michel~\cite{FM} in the form refined by
Xi~\cite{Xi1,Xi2,XiCorr}.

For the complementary mass, reserve the largest prime factor of each
modulus and divide the others alternately into two factors \(d\) and
\(e\).  Twisted multiplicativity and the Weil bound at the reserved
prime give
\[
  |\Kl(1;n)|\leq
  |\Kl(\overline{n/d}^{\,2};d)|^2+
  |\Kl(\overline{n/e}^{\,2};e)|^2.
\]
Both squared traces lie in the range of Xi's mean-square
estimate~\cite[Lemma~4.1]{XiBDH}, apart from a negligible region in
the even prime-factor layers.  On short prime boxes, the sieve weight
may be replaced by a constant with a negligible total error.
Each squared trace then has mean \(1+o(1)\), yielding
\[
  H_{\geq5}(X)\leq
  2\sum_{\om(n)\geq5}g(n/X)\mu^2(n)W(n)
  +o(X/\log X).
\]
A one-dimensional Selberg estimate bounds the total sieve mass.
Subtracting the layers with at most four prime factors gives the
required upper bound.  The resulting mass gap exceeds
\(0.0957X/\log X\), up to the small-prime and asymptotic errors.

It remains to control \(S\).  A Bombieri--Vinogradov-type estimate for
Kloosterman sums applies after the restriction excluding small prime
factors has been removed.  Extracting such a prime shifts the cutoff in
the Selberg weight by \(\log p/\log R\), which may tend to zero.  The
uniform shifted-weight estimate proved in Part~I, recalled in
Lemma~\ref{lem:uniform-selberg}, bounds this correction by
\(O(\eps X/\log X)\) for fixed \(\eps>0\), where \(X^\eps\) is
the pre-sieving threshold.

\subsection{Organization of the paper}
Section~\ref{sec:setup} introduces the sieve, proves the
mass-separation reduction, and establishes signed cancellation.  The
lower-layer estimate is established in Section~\ref{sec:low}.
Sections~\ref{sec:pair} and~\ref{sec:transfer} establish the
second-moment estimate for the contribution from moduli with at least
five prime factors.  Section~\ref{sec:numerics} evaluates the remaining
sieve terms, and Section~\ref{sec:proof} completes the proof.

\section{Sieve framework and mass separation}\label{sec:setup}

\subsection{Weights and mass separation}

Let \(g\) be a fixed nonnegative smooth function supported in \([1,2]\).
Write
\[
  \widetilde g(z)=\int_0^\infty g(x)x^{z-1}\,dx
\]
for its Mellin transform, and normalize \(g\) by
\(\widetilde g(1)=1\).
Put \(\calL=\log X\), and choose
\begin{equation}
\label{eq:R}
    R=X^{1/4}\calL^{-B},
\end{equation}
where \(B>0\) is chosen sufficiently large in the proof of
Proposition~\ref{prop:signed} and then kept fixed. Implied constants
may depend on the fixed parameters \(g\) and \(B\).
For a real-valued sieve profile \(F:[0,1]\to\mathbb R\), extend
\(F\) by zero to negative arguments and define
\begin{equation}
\label{eq:lambda_d}
  \lambda_d
  =\mu(d)F\!\left(\frac{\log(R/d)}{\log R}\right)\one_{d\leq R},
  \qquad
  W_F(n)=\left(\sum_{d\mid n}\lambda_d\right)^2.
\end{equation}
Unless otherwise stated, we take
\begin{equation}\label{eq:F-choice}
  F(x)=x^2.
\end{equation}
The double zero at the origin permits the uniform shifted-weight
estimate from Part~I, used to remove small prime factors.
Henceforth \(W=W_F\) for the choice in~\eqref{eq:F-choice}.

For a small fixed \(\eps>0\), let
\[
  P_\eps=\prod_{p<X^\eps}p.
\]
The masses \(H_{\leq4},H_{\geq5},M_\sigma\) and signed sums
\(S,S_{\leq4}\) defined below include the restriction
\((n,P_\eps)=1\). Other sums carry only the restrictions explicitly
displayed or stated locally. On the support of \(g(n/X)\), this
pre-sieving condition bounds the number of prime factors in terms of
\(\eps\); every prime factor is at least \(X^\eps\).  These properties
give uniform bounds for the trace functions and the prime-product
coefficients in Sections~\ref{sec:pair} and~\ref{sec:transfer}.

Define
\begin{equation}
\label{eq:H_leq4}
  H_{\leq4}(X)
  =
  \sum_{\substack{(n,P_\eps)=1\\\om(n)\leq4}}
    g\!\left(\frac nX\right)\mu^2(n)
    \abs{\Kl(1;n)}W(n),
\end{equation}
\begin{equation}
\label{eq:H_geq5}
  H_{\geq5}(X)
  =
  \sum_{\substack{(n,P_\eps)=1\\\om(n)\geq5}}
    g\!\left(\frac nX\right)\mu^2(n)
    \abs{\Kl(1;n)}W(n),
\end{equation}
\begin{equation}
\label{eq:S_X}
  S(X)
  =
  \sum_{(n,P_\eps)=1}
    g\!\left(\frac nX\right)\mu^2(n)
    \Kl(1;n)W(n).
\end{equation}

Let
\[
  S_{\leq4}(X)
  =
  \sum_{\substack{(n,P_\eps)=1\\\om(n)\leq4}}
    g\!\left(\frac nX\right)\mu^2(n)\Kl(1;n)W(n).
\]
For each \(\sigma\in\{+1,-1\}\), put
\[
  M_\sigma(X)
  =
  \sum_{\substack{(n,P_\eps)=1,\ \om(n)\leq4\\
                  \sigma\Kl(1;n)>0}}
    g\!\left(\frac nX\right)\mu^2(n)
    \abs{\Kl(1;n)}W(n).
\]
\begin{lemma}\label{lem:mass}
For each \(\sigma\in\{+1,-1\}\),
\[
  M_\sigma(X)
  \geq
  \frac12\left\{
    H_{\leq4}(X)-H_{\geq5}(X)-\abs{S(X)}
  \right\}.
\]
\end{lemma}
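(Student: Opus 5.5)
The plan is to decompose the restricted sums by sign and then use elementary inequalities. Throughout, all sums carry the restriction \((n,P_\eps)=1\) and the weight \(g(n/X)\mu^2(n)W(n)\), which is nonnegative because \(g\geq0\) and \(W\) is a square. Split the \(P_4\)-moduli according to the sign of \(\Kl(1;n)\). Moduli with \(\Kl(1;n)=0\) add nothing to any of the sums, so we may ignore them. This gives
\[
  H_{\leq4}(X)=M_{+1}(X)+M_{-1}(X),\qquad
  S_{\leq4}(X)=M_{+1}(X)-M_{-1}(X).
\]
Indeed, on the moduli with \(\sigma\Kl(1;n)>0\) we have \(\Kl(1;n)=\sigma\abs{\Kl(1;n)}\).

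Solving this linear system gives
\[
  M_\sigma(X)=\tfrac12\bigl(H_{\leq4}(X)+\sigma S_{\leq4}(X)\bigr)
  \geq\tfrac12\bigl(H_{\leq4}(X)-\abs{S_{\leq4}(X)}\bigr).
\]
It remains to compare \(S_{\leq4}\) with \(S\). We write \(S(X)=S_{\leq4}(X)+S_{\geq5}(X)\), where \(S_{\geq5}\) is the signed sum over the moduli with \(\om(n)\geq5\). Since the weights are nonnegative, the triangle inequality gives \(\abs{S_{\geq5}(X)}\leq H_{\geq5}(X)\), and therefore
\[
  \abs{S_{\leq4}(X)}\leq\abs{S(X)}+H_{\geq5}(X).
\]
Substituting this into the previous display yields the lemma.

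None of these steps is a real obstacle. The only points to check are the nonnegativity of the weights and the fact that moduli with vanishing Kloosterman sum contribute nothing to any of the sums.
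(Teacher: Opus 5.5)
Your proposal is correct and is essentially the paper's argument: the paper also writes \(2M_\sigma=H_{\leq4}+\sigma S_{\leq4}\geq H_{\leq4}-\abs{S_{\leq4}}\) and then uses \(S_{\geq5}=S-S_{\leq4}\) with \(\abs{S_{\geq5}}\leq H_{\geq5}\). Your explicit remarks on the nonnegativity of the weights and on moduli with \(\Kl(1;n)=0\) are the only points to check, and both hold.
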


\begin{proof}
Suppressing the dependence on \(X\), write
\(S_{\geq5}=S-S_{\leq4}\).  Since
\(\abs{S_{\geq5}}\leq H_{\geq5}\), we have
\[
  2M_\sigma
  =H_{\leq4}+\sigma S_{\leq4}
  \geq H_{\leq4}-\abs{S_{\leq4}}
  \geq H_{\leq4}-H_{\geq5}-\abs{S}.
\]
\end{proof}

\subsection{Small prime factors}
Extracting a small prime factor replaces the quadratic profile by a
difference of two translates.  We use the uniform estimate from Part~I
to bound the resulting mass.
For a sieve profile \(F\) on \([0,1]\), put
\begin{equation*}
  W_{F,R}(m)
  =
  \left(
    \sum_{d\mid m}
      \mu(d)F\!\left(\frac{\log(R/d)}{\log R}\right)
      \one_{d\leq R}
  \right)^2.
\end{equation*}
For \(0\leq t\leq1/2\), define
\[
  F_t(x)
  =
  \begin{cases}
    x^2,       & 0\leq x\leq t,\\
    2tx-t^2,   & t\leq x\leq1.
  \end{cases}
\]
For \(\vartheta\in[3,5]\) and a profile \(F\) with
square-integrable first and second weak derivatives, put
\[
\begin{aligned}
  \mathcal Q_\vartheta(F)
  &:=
  2\vartheta\int_0^1\abs{F'(x)}^2\,dx\\
  &\quad+
  \vartheta\int_0^1\abs{F''(x)}^2
    (1-x)(\vartheta-1+x)\,dx.
\end{aligned}
\]
Here derivatives are understood in the weak sense.

\begin{lemma}\label{lem:uniform-selberg}
Suppose \(Y^{1/5}\leq R\leq Y^{1/3}\), and set
\(\vartheta=\log Y/\log R\).  Uniformly for \(R\) in this range
and \(0\leq t\leq1/2\),
\[
  \sum_m
    g\!\left(\frac mY\right)
    \mu^2(m)2^{\om(m)}W_{F_t,R}(m)
  =
  \frac{Y}{\log Y}\mathcal Q_\vartheta(F_t)
  +O_g\!\left(\frac{Y}{(\log Y)^2}\right)
\]
as \(Y\to\infty\).
In particular,
\[
  \sum_m
    g\!\left(\frac mY\right)
    \mu^2(m)2^{\om(m)}W_{F_t,R}(m)
  \ll_g
  \left(t+\frac1{\log Y}\right)\frac{Y}{\log Y}.
\]
\end{lemma}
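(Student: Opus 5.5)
The plan is to treat the asymptotic as a two-dimensional Selberg-sieve computation, with the weight \(2^{\om(m)}\) playing the role of a sieve of dimension two, and to carry every estimate uniformly in the profile \(F_t\). The corollary then follows from the explicit shape of \(\mathcal Q_\vartheta(F_t)\).

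\emph{Step 1: Mellin representation.} Expand the square in \(W_{F_t,R}(m)\) and interchange summations, obtaining
\[
  \sum_{d_1,d_2\leq R}\mu(d_1)\mu(d_2)F_t\!\Big(\tfrac{\log(R/d_1)}{\log R}\Big)F_t\!\Big(\tfrac{\log(R/d_2)}{\log R}\Big)
  \sum_{[d_1,d_2]\mid m} g(m/Y)\mu^2(m)2^{\om(m)}.
\]
I will then write \(F_t\big(\log(R/d)/\log R\big)\) through a Mellin–Laplace integral. Because \(F_t(0)=F_t'(0)=0\) and \(F_t''\) is bounded, with one jump at \(x=t\), the kernel \(\widehat{F_t}\) has decay \(\ll |s|^{-2}\) after two integrations by parts, with an implied constant independent of \(t\). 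The inner sum over \(m\) is a Mellin integral of \(\widetilde g\) against
\[
  \sum_m \mu^2(m)2^{\om(m)}[d_1,d_2]\text{-restricted}\,m^{-w},
\]
which factors as \(\zeta(w)^2\) times an Euler product that is holomorphic and bounded in \(\mathrm{Re}\,w>1/2\). The \(d_1,d_2\) sums then become a triple contour integral whose Euler factor behaves like
\[
  \frac{\zeta(1+s_1+s_2)^{2}}{\zeta(1+s_1)^{2}\,\zeta(1+s_2)^{2}}
\]
times a holomorphic factor \(G(s_1,s_2,w)\).

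\emph{Step 2: Residues and error terms.} Shift \(w\) to the left past the double pole at \(w=1\), using \(R^2\leq Y^{2/3}\) to keep the shifted contour admissible. The dependence on \(R\) in the remaining \(s_1,s_2\) integrals is \(R^{s_1+s_2}\) against \(Y^{w}\), and \(\vartheta=\log Y/\log R\) enters here. Deforming the \(s_1,s_2\) contours to a Vinogradov-type zero-free region and Taylor expanding the zeta quotient near \(s=0\) (order-two zeros in each variable, order-two pole along the diagonal) yields the main term. This main term is a quadratic form in the derivatives of \(F_t\). I expect the \(\zeta(w)^2\) double pole to produce two pieces: \(2\vartheta\int|F'|^2\), and a piece involving \(\int|F''|^2\) weighted by the polynomial \((1-x)(\vartheta-1+x)\). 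The latter comes from splitting \(\log(Y/[d_1,d_2])\) and integrating by parts twice, legitimate since \(F_t\in H^2\). Secondary terms of the Taylor expansion and the tails of the contours each carry an extra factor \((\log Y)^{-1}\). Uniformity in \(t\) is automatic, because all constants depend only on \(\norm{F_t''}_\infty\leq2\) and the \(|s|^{-2}\) decay. The main obstacle is exactly this bookkeeping: confirming that the \(H^2\)-only regularity of \(F_t\) (the jump of \(F_t''\) at \(t\)) still leaves enough decay to make the contour tails \(O(Y/(\log Y)^2)\) uniformly. If the \(|s|^{-2}\) decay turns out to be borderline, I would first try smoothing \(F_t\) at scale \(1/\log Y\), whose effect on the main term is \(O(1/\log Y)\).

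\emph{Step 3: The corollary.} Compute \(\mathcal Q_\vartheta(F_t)\) directly. We have \(F_t'(x)=2x\) on \([0,t]\) and \(F_t'(x)=2t\) on \([t,1]\), so
\[
  \int_0^1|F_t'|^2=\tfrac43t^3+4t^2(1-t)\ll t^2.
\]
Also \(F_t''=2\cdot\one_{[0,t]}\), so the second integral is \(\leq 4t\cdot\vartheta\ll t\) for \(\vartheta\in[3,5]\). Hence \(\mathcal Q_\vartheta(F_t)\ll t\), and combining this with the error term \(O(Y/(\log Y)^2)\) gives the bound \(\ll (t+1/\log Y)\,Y/\log Y\).
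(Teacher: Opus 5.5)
Your Step~3 matches the paper's argument for the final assertion. The paper evaluates $\mathcal Q_\vartheta(F_t)=\vartheta\{4(\vartheta-1)t+(12-2\vartheta)t^2-\tfrac{20}{3}t^3\}$ exactly, while you only bound it; either gives $O(t)$ for $3\le\vartheta\le5$. The asymptotic itself is not proved in the paper: it is quoted from Part~I (Corollary~1.2 there). Your Steps~1--2 attempt an independent derivation by the GPY contour method. That route can work, but as written it leaves the one substantive point open. That the residue equals $\frac{Y}{\log Y}\mathcal Q_\vartheta(F_t)$ is only ``expected'', i.e.\ read off from the statement rather than computed.

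Here is what the missing computation involves. Evaluate the $m$-sum first for each square-free $\ell=[d_1,d_2]\le R^2$, as your appeal to $R^2\le Y^{2/3}$ suggests. The double pole at $w=1$ gives $YH(1)\frac{h(\ell)}{\ell}\bigl(\log\frac Y\ell+c_\ell\bigr)$, where $h(p)=\frac{2p}{p+2}$, $H(1)=\prod_p(1+\frac2p)(1-\frac1p)^2$ and $c_\ell=c+\sum_{p\mid\ell}\frac{2\log p}{p+2}$. The shifted integral is $\ll\tau(\ell)^{O(1)}(Y/\ell)^{1/2+\delta}$, which is summable over $\ell\le R^2\le Y^{2/3}$. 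In units of $Y/\log Y$, the contributions are as follows.
\begin{itemize}
\item The $\log Y$ part gives $\vartheta^2\int_0^1F''(x)^2(1-x)\,dx$. This comes from the dimension-two Selberg form $\sum_{d_1,d_2}\lambda_{d_1}\lambda_{d_2}h([d_1,d_2])/[d_1,d_2]\sim G(\log R)^{-2}\int_0^1F''(x)^2(1-x)\,dx$, with $G=\prod_p\frac{p}{p+2}(1-\frac1p)^{-2}$.
\item The $-\log\ell$ part is governed by the $u$-derivative at $u=0$ of $\zeta(1+u+s_1+s_2)^2/\bigl(\zeta(1+u+s_1)\zeta(1+u+s_2)\bigr)^2$. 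It gives $2\vartheta\int_0^1F'(x)^2\,dx-\vartheta\int_0^1F''(x)^2(1-x)^2\,dx$.
\end{itemize}
These add up to $\mathcal Q_\vartheta(F)$ because $H(1)G=1$, and the $c_\ell$-terms are of lower order. Do not keep $w$ free in a triple integral. There the quotient is $\zeta(w+s_1+s_2)^2/\bigl(\zeta(w+s_1)\zeta(w+s_2)\bigr)^2$, and moving $w$ left crosses the moving double pole at $w=1-s_1-s_2$.

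Your reason for uniformity in $t$ is also wrong, although the conclusion holds. The kernel is $\widehat{F_t}(z)=2(1-e^{-tz})/z^3$. It decays like $|z|^{-3}$, so the jump of $F_t''$ causes no difficulty and no smoothing is needed. But this bound is uniform in $t$ only on $\operatorname{Re}z\ge0$. The contours must be moved into $\operatorname{Re}z<0$, where $e^{-tz}$ is large, so no bound depending only on $\norm{F_t''}_\infty$ is available there.

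The clean route is to write $F_t(x)=x_+^2-(x-t)_+^2$, i.e.\ $(\log R)^2\lambda_d=\mu(d)\bigl(\log_+^2(R/d)-\log_+^2(R^{1-t}/d)\bigr)$. Then $W_{F_t,R}$ is a combination of three GPY-type cross terms with levels $R$ and $R^{1-t}$, both in $[Y^{1/10},Y^{1/3}]$ for $0\le t\le1/2$. Each cross term has main term of size $Y(\log R)^3$ and error $O(Y(\log R)^2)$, uniformly for levels in this range. After division by $(\log R)^4$ this is exactly the uniform $O(Y/(\log Y)^2)$.
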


\begin{proof}
The asymptotic is \cite[Corollary~1.2]{XiaoSelbergProfiles}.
Almost everywhere on \((0,1)\),
\[
  F_t'(x)
  =
  \begin{cases}
    2x, & 0<x<t,\\
    2t, & t<x<1,
  \end{cases}
  \qquad
  F_t''(x)=2\,\one_{(0,t)}(x).
\]
Direct integration gives
\[
  \mathcal Q_\vartheta(F_t)
  =
  \vartheta\left\{
    4(\vartheta-1)t
    +(12-2\vartheta)t^2
    -\frac{20}{3}t^3
  \right\}.
\]
Since \(3\leq\vartheta\leq5\), this is \(O(t)\) uniformly for
\(0\leq t\leq1/2\).
\end{proof}

\begin{lemma}
\label{lem:small-prime}
Let \(R=X^{1/4}(\log X)^{-B}\) be as in \eqref{eq:R}, with \(B>0\) fixed, and let \(W=W_{F,R}\) with \(F(x)=x^2\).
There is an absolute \(\eps_0>0\) such that, uniformly for
\(0<\eps\leq\eps_0\),
\begin{equation}
\label{eq:EepX}
    \begin{aligned}
  \mathcal E_\eps(X)
  &:=
  \sum_{(n,P_\eps)>1}
    g\!\left(\frac nX\right)\mu^2(n)
    2^{\om(n)}W(n)\\
  &\ll
  \left(
    \eps+\frac{\log\log X}{\log X}
  \right)\frac{X}{\log X}.
\end{aligned}
\end{equation}
\end{lemma}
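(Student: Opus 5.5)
Write \(n=pm\) with \(p<X^\eps\) prime and \(p\nmid m\); the condition \((n,P_\eps)>1\) means some such \(p\) divides \(n\). By the union bound over the smallest (or any) such prime,
\[
  \mathcal E_\eps(X)\leq \sum_{p<X^\eps}\ \sum_{(m,p)=1} g\!\left(\frac{pm}X\right)\mu^2(m)\,2\cdot 2^{\om(m)}W(pm).
\]
The key identity is the effect of extracting \(p\) on the Selberg weight. For \(p\nmid m\),
\[
  \sum_{d\mid pm}\lambda_d=\sum_{d\mid m}\mu(d)\Bigl\{F\bigl(\tfrac{\log(R/d)}{\log R}\bigr)-F\bigl(\tfrac{\log(R/d)}{\log R}-u\bigr)\Bigr\},\qquad u=\frac{\log p}{\log R},
\]
with \(F\) extended by zero on negative arguments. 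For \(F(x)=x^2\), the difference \(G_u(x)=F(x)-F(x-u)\) equals \(x^2\) for \(x\le u\) and \(2ux-u^2\) for \(x\ge u\); this is precisely \(F_u(x)\). Hence \(W(pm)=W_{F_u,R}(m)\) when \(p\nmid m\). For small \(\eps\) (say \(4\eps<1/2\), up to the \(\calL^{-B}\) factor), we have \(u\le 1/2\), so Lemma~\ref{lem:uniform-selberg} applies.

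Next, apply Lemma~\ref{lem:uniform-selberg} with \(Y=X/p\). The coprimality \((m,p)=1\) can be dropped by positivity: the summand is nonnegative and \(W_{F_u,R}(m)\) is defined for all \(m\). We need \(Y^{1/5}\le R\le Y^{1/3}\). Since \(R=X^{1/4}\calL^{-B}\) and \(X^{1-\eps}\le Y\le X\), the lower inequality holds and the upper one holds once \(\eps\) is small, giving the absolute \(\eps_0\). The "in particular" bound is uniform in \(R\) and \(t\), but \(g(pm/X)=g(m/Y)\) with \(Y\) varying; the lemma's implied constant depends only on \(g\), and \(Y\ge X^{1-\eps}\to\infty\), so this is fine. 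The inner sum is therefore
\[
  \ll \Bigl(\frac{\log p}{\log R}+\frac1{\log X}\Bigr)\frac{X}{p\log X}.
\]

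Finally, sum over \(p<X^\eps\). Mertens gives \(\sum_{p<X^\eps}\log p/p\ll\eps\log X\), so the first term contributes \(\ll \eps X/\log X\), using \(\log R\asymp\log X\). The second term contributes \(\frac{X}{(\log X)^2}\sum_{p<X^\eps}1/p\ll \frac{X\log\log X}{(\log X)^2}\). Together these give \eqref{eq:EepX}.

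The main obstacle is really just verifying that the shifted profile equals \(F_u\) exactly, including the truncation \(\one_{d\le R}\) versus \(\one_{pd\le R}\); this holds because \(F\) vanishes at \(0\) and is extended by zero. The only other check is that the range condition of Lemma~\ref{lem:uniform-selberg} holds uniformly in \(p\). Everything else is Mertens.
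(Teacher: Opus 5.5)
Your proposal is correct and follows the paper's proof essentially step for step. Both arguments use the union bound over primes \(p<X^\eps\), the identity \(W(pm)=W_{F_{t_p},R}(m)\) coming from the zero extension of \(F\), removal of \((m,p)=1\) by positivity, Lemma~\ref{lem:uniform-selberg} with \(Y=X/p\) (the paper takes \(\eps_0=1/12\)), and Mertens' estimates to sum over \(p\).
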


\begin{proof}
Extend \(F(x)=x^2\) by zero to \(x<0\).  For a prime
\(p<X^\eps\), put
\[
  t_p=\frac{\log p}{\log R},
  \qquad
  F_{t_p}(x)=F(x)-F(x-t_p).
\]
Put \(x_d=\log(R/d)/\log R\).  If \((m,p)=1\), every divisor \(e\mid pm\) is uniquely of the form
\(d\) or \(pd\), with \(d\mid m\). Thus \eqref{eq:lambda_d} gives
\[
\begin{aligned}
  \sum_{e\mid pm}\lambda_e
  &=\sum_{d\mid m}\bigl(\lambda_d+\lambda_{pd}\bigr)\\
  &=\sum_{d\mid m}\mu(d)
    \bigl\{F(x_d)-F(x_d-t_p)\bigr\}\one_{d\leq R}.
\end{aligned}
\]
Here the zero extension of \(F\) accounts for the terms with \(pd>R\).
Consequently,
\begin{equation}\label{eq:W_W}
  W_F(pm)=W_{F_{t_p},R}(m).
\end{equation}
Nonnegativity of every term in $\mathcal E_\eps(X)$ gives
\[
  \mathcal E_\eps(X)
  \leq
  \sum_{p<X^\eps}\ \sum_{\substack{n\\p\mid n}}
    g\!\left(\frac nX\right)
    \mu^2(n)2^{\om(n)}W(n).
\]
Writing \(n=pm\), the factor \(\mu^2(n)\) restricts us to
\((m,p)=1\).  On this support,
\[
  \mu^2(pm)=\mu^2(m),
  \qquad
  2^{\om(pm)}=2\cdot2^{\om(m)}.
\]
Consequently, by~\eqref{eq:W_W},
\begin{equation}
\label{E_mid}
    \mathcal E_\eps(X)
  \leq
  2\sum_{p<X^\eps}\ \sum_{(m,p)=1}
    g\!\left(\frac{pm}{X}\right)
    \mu^2(m)2^{\om(m)}W_{F_{t_p},R}(m).
\end{equation}

Take \(\eps_0=1/12\).  For \(0<\eps\leq\eps_0\) and all sufficiently
large \(X\), one has \(p<R\), \(0<t_p<1/2\), and, with
\(Y_p=X/p\),
\[
  Y_p^{1/5}\leq R\leq Y_p^{1/3},
  \qquad
  \log Y_p\asymp\log X,
\]
uniformly for \(p<X^\eps\).  We may drop the condition \((m,p)=1\)
from the nonnegative inner sum in \eqref{E_mid}.  Lemma~\ref{lem:uniform-selberg}
therefore yields
\[
  \mathcal E_\eps(X)
  \ll
  \frac{X}{\log X}
  \left\{
    \sum_{p<X^\eps}\frac{t_p}{p}
    +\frac1{\log X}\sum_{p<X^\eps}\frac1p
  \right\}.
\]
Finally,
\[
  \sum_{p<X^\eps}\frac{t_p}{p}
  =
  \frac1{\log R}
  \sum_{p<X^\eps}\frac{\log p}{p}
  \ll\eps+\frac1{\log X},
\]
while
\[
  \sum_{p<X^\eps}\frac1p\ll\log\log X.
\]
These estimates prove the stated uniform bound.
\end{proof}

By the Weil bound, \eqref{eq:EepX} also holds when \(2^{\om(n)}\)
is replaced by \(|\Kl(1;n)|\).  It also holds when that factor is
replaced by \(1\).  We use these two consequences for signed
cancellation and for the fixed prime-factor layers, respectively.

\subsection{The signed sum}

\begin{proposition}
\label{prop:signed}
For the signed sum \(S(X)\) in \eqref{eq:S_X}, there is a
nonnegative function \(\delta_1(\eps)\) such that
\(\delta_1(\eps)\to0\) as \(\eps\to0^+\) and
\[
  \abs{S(X)}
  \leq
  \bigl(\delta_1(\eps)+o(1)\bigr)\frac{X}{\log X}.
\]
Here \(o(1)\) refers to the limit \(X\to\infty\), with \(\eps\)
fixed.
\end{proposition}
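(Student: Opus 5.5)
First remove the pre-sieving condition. Write
\[
  S(X)=S^\ast(X)-\sum_{(n,P_\eps)>1}g\!\left(\frac nX\right)\mu^2(n)\Kl(1;n)W(n),
  \qquad
  S^\ast(X)=\sum_{n}g\!\left(\frac nX\right)\mu^2(n)\Kl(1;n)W(n).
\]
By the Weil bound \(\abs{\Kl(1;n)}\leq2^{\om(n)}\), the subtracted sum is at most \(\mathcal E_\eps(X)\). Lemma~\ref{lem:small-prime} bounds this by \(\ll(\eps+\log\log X/\log X)X/\log X\), so it supplies the term \(\delta_1(\eps)=C\eps\) plus an \(o(1)\) error. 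It therefore suffices to show \(S^\ast(X)=o(X/\log X)\). In fact I will aim for \(S^\ast(X)\ll X\calL^{-2}\) once \(B\) is large.

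Next open the square:
\[
  S^\ast(X)=\sum_{d_1,d_2\leq R}\lambda_{d_1}\lambda_{d_2}\sum_{[d_1,d_2]\mid n}g\!\left(\frac nX\right)\mu^2(n)\Kl(1;n).
\]
Put \(d=[d_1,d_2]\leq R^2=X^{1/2}\calL^{-2B}\) and write \(n=dm\) with \((m,d)=1\). Twisted multiplicativity gives
\[
  \Kl(1;dm)=\Kl(\overline m^{\,2};d)\,\Kl(\overline d^{\,2};m).
\]
The inner sum is then a sum over square-free \(m\sim X/d\) of a product of two Kloosterman sums. I would handle it with the Bombieri--Vinogradov-type estimate for Kloosterman sums in the form used by Fouvry--Michel and Xi~\cite{FM,Xi1,Xi2,XiCorr}. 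That estimate says that for level \(D\leq X^{1/2}\calL^{-A(B)}\),
\[
  \sum_{d\leq D}\tau(d)^{C}\max_{(a,d)=1}\abs{\sum_{\substack{n\equiv0\ (d)}}g\!\left(\frac nX\right)\mu^2(n)\Kl(1;n)}\ll X\calL^{-A}.
\]
This rests on the Weil bound together with Fourier expansion in \(m\) and the sum-of-Kloosterman-sums (Kuznetsov/Deshouillers--Iwaniec) technology. The \(\mu^2(m)\) and \((m,d)=1\) conditions are removed by Möbius inversion over small squares and common divisors, and large square divisors are treated trivially.

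Since \(\abs{\lambda_d}\leq1\) and \(d\mapsto\#\{(d_1,d_2):[d_1,d_2]=d\}\leq\tau_3(d)\), summing the estimate over \(d\leq R^2\) gives
\[
  S^\ast(X)\ll X\calL^{-A}.
\]
This holds provided \(B\) is chosen large enough in terms of the losses \(\calL^{A(B)}\) and divisor-function moments in the level-of-distribution theorem. That fixes \(B\), as announced after~\eqref{eq:R}.

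\textbf{Main obstacle.} The crux is the level-of-distribution input for \(\Kl(1;n)\) along \(n\equiv0\pmod d\) with \(d\) up to essentially \(X^{1/2}\), uniformly in \(d\) and with a power-of-log saving. I would import it in Xi's refined form~\cite{XiBDH} rather than reprove it. The point to check is that the twisting character \(\Kl(\overline m^{\,2};d)\) and the smooth weight \(g\) fit the hypotheses there, and that the coprimality and square-free conditions cost only powers of \(\calL\). Everything else — the small-prime removal via Lemma~\ref{lem:small-prime} and the trivial treatment of the \(\mu^2\) tails — is routine.
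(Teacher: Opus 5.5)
Your proposal is correct and follows the paper's proof. The pre-sieving condition is removed at cost $\mathcal E_\eps(X)$ via the Weil bound and Lemma~\ref{lem:small-prime}. The unrestricted sum is handled by expanding $W(n)=\sum_{\ell\le R^2}\beta_\ell\one_{\ell\mid n}$ with $\abs{\beta_\ell}\le3^{\om(\ell)}$ and applying the Bombieri--Vinogradov-type estimate for Kloosterman sums at level $R^2=X^{1/2}\calL^{-2B}$.

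One bibliographic correction. That estimate is \cite[Lemma~9]{Xi1}, going back to Fouvry--Michel, and it applies directly to $\sum_{\ell\mid n}g(n/X)\mu^2(n)\Kl(1;n)$, so your twisted-multiplicativity reduction is unnecessary. It is not the Barban--Davenport--Halberstam theorem of \cite{XiBDH}, which is a mean-square result for bilinear sums and does not give this level-$X^{1/2}$ cancellation.
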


\begin{proof}
Let
\[
  S_0(X)
  =
  \sum_n
    g\!\left(\frac nX\right)\mu^2(n)\Kl(1;n)W(n)
\]
be the sum in \eqref{eq:S_X} without the condition \((n,P_\eps)=1\).
Recalling~\eqref{eq:lambda_d} and expanding the square in \(W\) gives
\[
  W(n)
  =
  \sum_{d_1,d_2}
    \lambda_{d_1}\lambda_{d_2}\one_{[d_1,d_2]\mid n}
  =
  \sum_{\ell\leq R^2}\beta_\ell\one_{\ell\mid n},
  \qquad
  \beta_\ell
  =
  \sum_{[d_1,d_2]=\ell}\lambda_{d_1}\lambda_{d_2}.
\]
Since \(F(x)=x^2\) on \([0,1]\), we have
\(\abs{\lambda_d}\leq1\).  Moreover, \(\beta_\ell=0\) unless
\(\ell\) is square-free, and for each prime dividing \(\ell\) there
are three possibilities: it divides only \(d_1\), only \(d_2\), or
both.  Hence
\[
  \abs{\beta_\ell}
  \leq3^{\om(\ell)}.
\]
It follows that
\[
  S_0(X)
  =
  \sum_{\ell\leq R^2}\beta_\ell
  \sum_{\substack{n\\\ell\mid n}}
    g\!\left(\frac nX\right)\mu^2(n)\Kl(1;n).
\]
Fix \(A>2\) and choose \(B=B(A)\) sufficiently large.  Since \(R^2=X^{1/2}\calL^{-2B}\) by \eqref{eq:R}, the bound for \(\beta_\ell\)
and the Bombieri--Vinogradov-type estimate
\cite[Lemma~9]{Xi1} give
\[
  S_0(X)\ll_A X\calL^{-A}.
\]
On the other hand,
\[
  \abs{S(X)-S_0(X)}
  \leq\mathcal E_\eps(X).
\]
Lemma~\ref{lem:small-prime} therefore proves the proposition, with
\(\delta_1(\eps)\ll\eps\).
\end{proof}

\section{Absolute mass from two- and three-prime moduli}
\label{sec:low}

We apply the prime-box lower bound of Xi~\cite[Section~4.2]{Xi1},
which combines equidistribution with the rearrangement inequality
in \cite[Lemma~2.3]{Matomaki}.  We retain only the two- and
three-prime contributions and explain the passage to the weights
and pre-sieving condition used here.

For every integer \(i\geq2\), put
\[
  \calT_i=\left\{(\alpha_1,\ldots,\alpha_i):
    \alpha_1>\cdots>\alpha_i>0,\quad
    \alpha_1+\cdots+\alpha_i=1\right\}.
\]
Let \(F\in C^2([0,1];\mathbb R)\) satisfy
\(F(0)=F'(0)=0\), and extend \(F\) by zero to
\((-\infty,0]\).  Define
\begin{equation}\label{eq:Li}
  L_i(F;\boldsymbol\alpha)
  =\sum_{\mathcal A\subseteq\{1,\ldots,i\}}
    (-1)^{|\mathcal A|}
    F\!\left(1-4\sum_{j\in\mathcal A}\alpha_j\right).
\end{equation}
Integrals over \(\calT_i\) use the coordinates
\(\alpha_2,\ldots,\alpha_i\), with
\(\alpha_1=1-\alpha_2-\cdots-\alpha_i\).
For \(i=2,3\), set
\[
  \calR_2=\{\boldsymbol\alpha\in\calT_2:
      \alpha_2>\tfrac34\alpha_1\},\qquad
  \calR_3=\{\boldsymbol\alpha\in\calT_3:
      \alpha_2>\tfrac12\alpha_1\},
\]
and
\begin{equation}
\label{eq:AiF}
  A_i(F)=\int_{\calR_i}
    \frac{L_i(F;\boldsymbol\alpha)^2}
         {\alpha_1\cdots\alpha_i}\,
    d\alpha_2\cdots d\alpha_i.
\end{equation}

The zero extension of \(F\) is Lipschitz.  Pairing the subsets
\(\mathcal A\subseteq\{1,\ldots,i-1\}\) and
\(\mathcal A\cup\{i\}\) in~\eqref{eq:Li} gives
\begin{equation*}
    \begin{aligned}
        |L_i(F; \boldsymbol{\alpha})| 
        &\le \sum_{\mathcal{A} \subseteq \{1, \dots, i-1\}}
        \biggl| F \Bigl( 1 - 4 \sum_{j \in \mathcal{A}} \alpha_j \Bigr) 
        - F \Bigl( 1 - 4 \sum_{j \in \mathcal{A}} \alpha_j - 4\alpha_i \Bigr) \biggr| \\
        &\le \sum_{\mathcal{A} \subseteq \{1, \dots, i-1\}} 4\max_{0\le x\le1}|F'(x)|\ \alpha_i
        \ll_{i,F} \alpha_i.
    \end{aligned}
\end{equation*} 
Hence, for
every fixed \(i\geq2\),
\begin{equation}\label{eq:low-integrable-majorant}
  \frac{L_i(F;\boldsymbol\alpha)^2}{\alpha_1\cdots\alpha_i}
  \ll_{i,F}\frac{\alpha_i}{\alpha_1\cdots\alpha_{i-1}}.
\end{equation}
The right-hand side is integrable on \(\calT_i\): use
\(\alpha_1\geq1/i\) and integrate successively on
\(0<\alpha_i<\cdots<\alpha_2<1\), starting with \(\alpha_i\).
In particular, \(A_2(F)\) and \(A_3(F)\) defined by \eqref{eq:AiF} are finite.

Write
\[
  \rho_2(\boldsymbol\alpha)=\alpha_2-\tfrac34\alpha_1,
  \qquad
  \rho_3(\boldsymbol\alpha)=\alpha_2-\tfrac12\alpha_1,
\]
and, for sufficiently small \(\eps>0\), put
\begin{equation}
\label{eq:def_Ri}
    \calR_i(\eps)=\left\{\boldsymbol\alpha\in\calR_i:
    \begin{array}{l}
      \alpha_i\geq2\eps,\quad \rho_i(\boldsymbol\alpha)\geq\eps,\\
      \alpha_j-\alpha_{j+1}\geq\eps\quad(1\leq j<i)
    \end{array}
  \right\}.
\end{equation}
These compact regions exhaust \(\calR_i\) as \(\eps\to0^+\).

\begin{proposition}\label{prop:low}
Let \(F\) be as above, and \(H_{\leq4}(X;F)\) denote
the mass in~\eqref{eq:H_leq4} with \(W\) replaced by \(W_F\).
For every sufficiently small fixed \(\eps>0\),
\[
  H_{\leq4}(X;F)
  \geq\left(4C_2A_2(F)+8C_3A_3(F)
       -\delta_{2,F}(\eps)-o(1)\right)\frac{X}{\log X},
\]
where \(\delta_{2,F}(\eps)\geq0\) tends to zero as
\(\eps\to0^+\), and
\[
  C_2=0.11109,\qquad C_3=0.03557.
\]
Here \(o(1)\) refers to \(X\to\infty\), with \(F\), \(g\),
and \(\eps\) fixed.
\end{proposition}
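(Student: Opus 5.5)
We bound \(H_{\leq4}(X;F)\) from below by discarding everything except square-free \(n=p_1p_2\) and \(n=p_1p_2p_3\) whose logarithmic profile \(\boldsymbol\alpha=(\log p_j/\log n)_j\) lies in \(\calR_2(\eps)\) or \(\calR_3(\eps)\). All terms are nonnegative, so this only lowers the mass. Since \(\alpha_i\geq2\eps\), every such \(n\) has all prime factors \(\geq X^{\eps}\) once \(X\) is large. Hence the condition \((n,P_\eps)=1\) holds automatically, and no appeal to Lemma~\ref{lem:small-prime} is needed here.

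The first step is to evaluate the weight on these moduli. For \(n=p_1\cdots p_i\) square-free, \(\sum_{d\mid n}\lambda_d=\sum_{\mathcal A}\mu(d_{\mathcal A})F(\log(R/d_{\mathcal A})/\log R)\), where \(d_{\mathcal A}=\prod_{j\in\mathcal A}p_j\). Since \(\log R=\tfrac14\log X-B\log\calL\) and \(\log n=\log X+O(1)\), we have
\[
  \frac{\log(R/d_{\mathcal A})}{\log R}
  =1-4\sum_{j\in\mathcal A}\alpha_j+O\!\left(\frac{\log\calL}{\calL}\right).
\]
The zero extension of \(F\) is Lipschitz, so \(W_F(n)=L_i(F;\boldsymbol\alpha)^2+o(1)\) uniformly on the compact region. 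This matches~\eqref{eq:Li}.

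Next, we partition \(\calR_i(\eps)\) into small boxes \(\prod_j[\beta_j,\beta_j+\eta]\) in logarithmic scale, with \(\eta\to0\) after \(X\to\infty\). On each box, \(W_F\) is constant up to \(O_F(\eta)+o(1)\), so it remains to bound from below \(\sum g(n/X)|\Kl(1;n)|\) over \(n\) with prime factors in the box. Here we invoke Xi's prime-box argument \cite[Section~4.2]{Xi1}. Twisted multiplicativity writes \(\Kl(1;p_1\cdots p_i)\) as a product of \(\Kl(\overline{\ast}^{\,2};p_j)\). Equidistribution of Kloosterman angles (Sato--Tate, vertical and horizontal in the appropriate variable), combined with Matom\"aki's rearrangement inequality \cite[Lemma~2.3]{Matomaki}, gives a lower bound
\[
  \sum |\Kl(1;n)|\geq (c_i-o(1))\sum 1
\]
on each box. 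The constraints \(\alpha_2>\tfrac34\alpha_1\) for \(i=2\) and \(\alpha_2>\tfrac12\alpha_1\) for \(i=3\), encoded in \(\calR_i\), are exactly the ranges where Xi's equidistribution over the two largest primes is available. The separations \(\rho_i\geq\eps\) and \(\alpha_j-\alpha_{j+1}\geq\eps\) keep us uniformly inside those ranges and ensure the primes are distinct. Counting the \(n\) in a box by the prime number theorem, with weight \(g(n/X)\) and \(\widetilde g(1)=1\), gives
\[
  \frac{X}{\log X}\int_{\text{box}}\frac{d\alpha_2\cdots d\alpha_i}{\alpha_1\cdots\alpha_i},
\]
after ordering \(p_1>\cdots>p_i\). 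The stated constants \(4C_2\) and \(8C_3\) should be Xi's constants \(2^{i}C_i\), combining the normalization of the Kloosterman mass with the box measure. We will match this normalization with \cite{Xi1} rather than rederive it.

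Summing over boxes and letting \(\eta\to0\) gives the lower bound
\[
  \Bigl(4C_2\int_{\calR_2(\eps)}+8C_3\int_{\calR_3(\eps)}\Bigr)\frac{L_i^2}{\alpha_1\cdots\alpha_i}\cdot\frac{X}{\log X}(1-o(1)).
\]
The gap between \(\int_{\calR_i(\eps)}\) and \(A_i(F)\) is defined to be \(\delta_{2,F}(\eps)\). It tends to \(0\) by dominated convergence, using the integrable majorant~\eqref{eq:low-integrable-majorant} and the fact that the \(\calR_i(\eps)\) exhaust \(\calR_i\).

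The main obstacle is the transfer step. Xi's statement is formulated for his own weights and ranges, so we must check that the box-wise lower bound is uniform over the compact \(\calR_i(\eps)\). We must also check that the error from replacing \(W_F\) by \(L_i^2\), which involves the \(\calL^{-B}\) shift in \(R\), is \(o(1)\) uniformly. I would first try to state Xi's result as a per-box inequality for constant weights, which makes the transfer immediate.
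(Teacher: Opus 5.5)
You have the same architecture as the paper's proof. You keep only $n=p_1p_2$ and $n=p_1p_2p_3$ with profile in $\calR_i(\eps)$, so pre-sieving is automatic. You replace $W_F$ by $L_i(F;\boldsymbol\alpha)^2$ and bound $\abs{\Kl(1;n)}$ box by box via equidistribution and Matom\"aki's rearrangement inequality. You convert the weighted prime count into $\frac{X}{\log X}\int L_i^2/(\alpha_1\cdots\alpha_i)$ and take $\delta_{2,F}(\eps)$ to be the missing integral over $\calR_i\setminus\calR_i(\eps)$. These steps are fine.

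The gap is the step that carries the content. The box-wise inequality $\sum\abs{\Kl(1;n)}\geq(2^iC_i-o(1))\sum 1$ is only asserted by pointing to \cite[Section~4.2]{Xi1}, and the transfer you flag as the main obstacle is left undone. Your box geometry also rules out a direct citation. On a box of fixed logarithmic width $\eta$, the factor $g(n/X)$ is far from constant: it cuts out the shell $p_1\cdots p_i\asymp X$. So the two marginal distributions are needed for a $g$-weighted measure on a non-product set of prime tuples. For $i=3$ the available inputs are the Type~II estimates \cite[Lemmas~2 and~3]{Xi1}, which are bilinear in separated variables $a_m,b_n$. You would first have to remove this coupling (Mellin inversion of $g$, or further subdivision) and recheck the size conditions. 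With $\eta$ fixed there are only $O_\eta(1)$ boxes, so uniformity over $\calR_i(\eps)$ is automatic once a per-box statement exists; what is missing is that statement itself. (The factor $2^i$ is simply $\abs{\Kl(1;n)}=2^i\abs{\mathcal C(1;n)}$, where $\mathcal C(a;q)=2^{-\om(q)}\Kl(\overline a^{\,2};q)$. $C_i$ is the rearrangement lower bound for the mean of $\abs{\mathcal C(1;n)}$; no box measure is involved.)

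The paper supplies this step as Lemma~\ref{lem:low-absolute-box}. It uses thin boxes $I_{m_1}\times\cdots\times I_{m_i}$ with $I_m=(Q_m,Q_m(1+\calL^{-1})]$ and $Q_m=X^\eps(1+\calL^{-1})^m$. On these, $g$ varies by $O(\calL^{-1})$ and $W_F$ by $O(\log\calL/\calL)$, so both weights are frozen and only an unweighted bound on a product of prime sets is needed.

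The two marginal statements are verified on the Chebyshev polynomials $U_k$ with explicit power savings. The first says $\mathcal C(p_2\cdots p_i;p_1)$ is Sato--Tate distributed. The second says $\mathcal C(p_1;p_2\cdots p_i)$ is distributed as a product of $i-1$ independent Sato--Tate variables.
\begin{itemize}
\item For $i=2$: the bound \cite[Theorem~1.5]{FKM} for trace functions over primes in a short interval is applied in both directions. The condition $\alpha_2>\frac34\alpha_1$ enters through $(1+p_1/P_2)^{1/12}p_1^{-\sigma/2}\ll X^{-\eps/48}$.
\item For $i=3$, first statement: \cite[Lemma~2]{Xi1} is applied with modulus $p_1$. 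Here $\alpha_2>\frac12\alpha_1$ enters through $P_1^{1/4}P_2^{-1/2}\leq X^{-\eps/4}$.
\item For $i=3$, second statement: the paper writes $\mathcal C(p_1;p_2p_3)=\mathcal C(p_1p_3;p_2)\,\mathcal C(p_1p_2;p_3)$. It bounds the joint moments with \cite[Lemma~3]{Xi1}, and uses the one-variable prime-sum bound for the moments in which only $\mathcal C(p_1p_3;p_2)$ appears.
\end{itemize}

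These normalized errors are $\calL^{O(1)}X^{-c\eps}$ independently of the box, hence uniform over the $\ll\calL^{2i-1}$ boxes. Matom\"aki's bound depends only on the limiting marginals, so it is uniform as well. The box errors then sum to $o(X/\calL)$, and your auxiliary limit $\eta\to0$ is unnecessary.
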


\subsection{Prime boxes}
\label{subsec:low-boxes}

Fix a sufficiently small \(\eps>0\). Put
\[
  Q_m=X^\eps(1+\calL^{-1})^m,
  \qquad I_m=(Q_m,Q_{m+1}] \quad(m\geq0).
\]
For \(i\in\{2,3\}\) and
\(\boldsymbol m=(m_1,\ldots,m_i)\), write
\[
  P_j=Q_{m_j},\qquad
  B_{\boldsymbol m}=I_{m_1}\times\cdots\times I_{m_i},
\]
and define
\[
  \alpha_j(\boldsymbol m)=\frac{\log P_j}{\calL}
    \quad(2\leq j\leq i),\qquad
  \alpha_1(\boldsymbol m)=1-\sum_{j=2}^i\alpha_j(\boldsymbol m).
\]
Thus \(\boldsymbol\alpha(\boldsymbol m)\) depends only on
\(m_2,\ldots,m_i\).
Let \(\mathfrak B_i(\eps)\) be the set of indices
\(\boldsymbol m\) for which
\(\boldsymbol\alpha(\boldsymbol m)\in\calR_i(\eps)\) and
\(t_1\cdots t_i/X\in\operatorname{supp}(g)\) for some
\(\boldsymbol t\in B_{\boldsymbol m}\). Let \(\mathbb P\) denote the set of primes. For these indices,
define the set of prime tuples
\[
  \mathcal P_i(\boldsymbol m)
    =(\mathbb P\cap I_{m_1})\times\cdots\times
      (\mathbb P\cap I_{m_i}).
\]
Write
\[
  s_j=\#(\mathbb P\cap I_{m_j}),\qquad
  N_{\boldsymbol m}=s_1\cdots s_i.
\]

The support condition gives
\begin{equation}
\label{eq:SP_cond}
     X(1+\calL^{-1})^{-i}\leq P_1\cdots P_i\leq2X,
  \qquad
  \frac{\log P_1}{\calL}
    =\alpha_1(\boldsymbol m)+O(\calL^{-1}).
\end{equation}
Consequently, \eqref{eq:def_Ri} implies, for sufficiently large \(X\),
\begin{gather}
  P_i\geq X^{2\eps},\qquad
  P_j/P_{j+1}\geq X^{\eps/2}\quad(1\leq j<i),
  \label{eq:low-power-separation}\\
  P_2\geq P_1^{3/4}X^{\eps/2}\quad(i=2),\qquad
  P_2\geq P_1^{1/2}X^{\eps/2}\quad(i=3).
  \label{eq:low-Xi-margins}
\end{gather}
Thus every tuple in \(\mathcal P_i(\boldsymbol m)\) satisfies
\(p_1>\cdots>p_i>X^\eps\). Its product is square-free,
satisfies the pre-sieving condition \((n,P_\eps)=1\), and occurs in at most one
of the boxes under consideration.

The prime number theorem gives, for every fixed \(A>0\),
\begin{equation}\label{eq:low-interval-pnt}
  s_j=\left(1+O_{A,\eps}(\calL^{-A})\right)
       \int_{I_{m_j}}\frac{dt}{\log t}
       \asymp_\eps P_j\calL^{-2}.
\end{equation}
In particular, \(N_{\boldsymbol m}\asymp_\eps X\calL^{-2i}\).
Since
\[
P_j=X^\eps(1+\calL^{-1})^{m_j}\leq 2X,
\]
there are \(O_\eps(\calL^2)\) choices for each \(m_j\).
When \(m_2,\ldots,m_i\)
are fixed, \eqref{eq:SP_cond} gives
\begin{equation*}
    \frac{X}{P_2\cdots P_i(1+\calL^{-1})^i}
      \leq X^\eps(1+\calL^{-1})^{m_1}
      \leq\frac{2X}{P_2\cdots P_i}.
\end{equation*}
Thus there are \(O(\calL)\) choices for \(m_1\). 
 
Hence
\begin{equation}\label{eq:low-box-count}
  \#\mathfrak B_i(\eps)\ll_\eps\calL^{2i-1},\qquad
  \sum_{\boldsymbol m\in\mathfrak B_i(\eps)}N_{\boldsymbol m}
    \ll_\eps X/\calL.
\end{equation}
All estimates below are uniform over
\(\boldsymbol m\in\mathfrak B_i(\eps)\), with \(\eps\) fixed.

\subsection{Equidistribution and the absolute-value lower bound}
\label{subsec:low-equidistribution}

For square-free \(q\) and \((a,q)=1\), put
\[
  \mathcal C(a;q)=2^{-\om(q)}\Kl(\overline a^{\,2};q).
\]
Then \(|\mathcal C(a;q)|\leq1\), and twisted multiplicativity gives
\begin{equation}\label{eq:low-factorization}
  \mathcal C(1;p_1\cdots p_i)
    =\mathcal C(p_2\cdots p_i;p_1)
       \mathcal C(p_1;p_2\cdots p_i).
\end{equation}
We use the Sato--Tate measure in the coordinate \(t=\cos\theta\),
\[
  d\nu(t)=\frac2\pi\sqrt{1-t^2}\,dt\qquad(-1\leq t\leq1).
\]
Let \(U_k\) denote the Chebyshev polynomial of the second kind,
so that
\[
  U_k(\cos\theta)=\frac{\sin((k+1)\theta)}{\sin\theta},
  \qquad |U_k(t)|\leq k+1\quad(-1\leq t\leq1).
\]
In particular, \(U_0=1\) and \(\int U_k\,d\nu=0\) for \(k\geq1\).

\begin{lemma}\label{lem:low-absolute-box}
For \(i\in\{2,3\}\), uniformly over
\(\boldsymbol m\in\mathfrak B_i(\eps)\),
\begin{equation}\label{eq:low-absolute-box}
  \sum_{\boldsymbol p\in\mathcal P_i(\boldsymbol m)}
    |\Kl(1;p_1\cdots p_i)|
    \geq\bigl(2^iC_i-o_\eps(1)\bigr)N_{\boldsymbol m},
\end{equation}
where \(C_2,C_3\) are as in Proposition~\ref{prop:low}.
The error tends to zero as \(X\to\infty\), with \(\eps\) fixed.
\end{lemma}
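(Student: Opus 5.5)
Using \(|\Kl(1;p_1\cdots p_i)|=2^i|\mathcal C(1;p_1\cdots p_i)|\) and the factorization \eqref{eq:low-factorization}, it suffices to show
\[
  \sum_{\boldsymbol p\in\mathcal P_i(\boldsymbol m)}
  |\mathcal C(p_2\cdots p_i;p_1)|\,|\mathcal C(p_1;p_2\cdots p_i)|
  \geq (C_i-o_\eps(1))N_{\boldsymbol m}.
\]
The plan follows Xi~\cite[Section~4.2]{Xi1}. Each factor is equidistributed with respect to an appropriate measure, the two distributions are nearly independent across the box, and the sum is then bounded below by Matom\"aki's rearrangement inequality~\cite[Lemma~2.3]{Matomaki} applied to the two marginals. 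The constant \(C_i\) is the resulting numerical value. For \(i=2\) it is the rearrangement minimum of \(\int|t||u|\) between two Sato--Tate laws. For \(i=3\) the second marginal is the law of a product of two independent Sato--Tate variables.

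First, the large prime. Fix \(p_2,\ldots,p_i\) and vary \(p_1\in\mathbb P\cap I_{m_1}\). The value \(\mathcal C(p_2\cdots p_i;p_1)=\tfrac12\Kl(\overline{(p_2\cdots p_i)}^{\,2};p_1)\) depends on \(p_1\) both through the modulus and through the argument. The reversed viewpoint is cleaner. Fix \(p_1\) and vary \(a=p_2\cdots p_i\). By Deligne and Katz, \(\mathcal C(a;p_1)\) is a Sato--Tate trace function of \(a\). Its Chebyshev moments \(U_k(\mathcal C(a;p_1))\) are traces of \(\mathrm{Sym}^k\) of the Kloosterman sheaf pulled back along \(a\mapsto\overline a^{2}\). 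These are non-trivial irreducible sheaves, so the moments are \(O_k(\sqrt{p_1})\) over complete residue systems.

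Second, the dependence on \(p_1\) in \(\mathcal C(p_1;p_2\cdots p_i)\). By multiplicativity this factor is a product of \(\mathcal C\bigl(p_1\overline{(\cdots)};p_j\bigr)\) over \(j\geq2\). Fix the residues of \(p_1\) modulo \(p_2\cdots p_i\). The primes \(p_1\) in \(I_{m_1}\) in a given class are counted by Siegel--Walfisz, or by Bombieri--Vinogradov on average, and the modulus \(p_2\cdots p_i\leq X^{1-\alpha_1+o(1)}\) is small enough for this.

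The joint statement I aim for is: for fixed \(k,l\),
\[
  \sum_{\boldsymbol p}U_k(\mathcal C(p_2\cdots p_i;p_1))\,
  V_l(\boldsymbol p)
  =\delta_{k=0}\sum_{\boldsymbol p}V_l(\boldsymbol p)+o(N_{\boldsymbol m}).
\]
Here \(V_l\) is a Chebyshev-type moment of the second factor. It is itself a sum of products of the factors \(U_{l_j}(\mathcal C(\cdot;p_j))\), each of which equidistributes over \(p_1\), since each is a trace function in the residue of \(p_1\) modulo \(p_j\). These moments are handled by the Polya--Vinogradov method or a Weyl-type bound for sums of trace functions over primes in short intervals. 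That is where the box margins \eqref{eq:low-power-separation} and \eqref{eq:low-Xi-margins}, namely \(P_2\geq P_1^{3/4}X^{\eps/2}\) or \(P_2\geq P_1^{1/2}X^{\eps/2}\), enter. They make the ratio of interval length to modulus exceed \(\sqrt{\text{modulus}}\) by a power of \(X\), and Fouvry--Michel type bilinear estimates for trace functions over primes, as quoted in Xi, apply.

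Weierstrass approximation of \(|t|\) by polynomials, with the Chebyshev degrees fixed and the error controlled uniformly, then converts these moment statements into weak convergence of the empirical joint law on the box. The box is short in every coordinate, since \(I_{m_j}\) has relative length \(\calL^{-1}\). Uniformity over \(\boldsymbol m\in\mathfrak B_i(\eps)\) follows because all error terms save a fixed power of \(X\) depending only on \(\eps\).

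\textbf{Main obstacle.} Full joint equidistribution of the pair is more than the available estimates give. Only the marginal of each factor is accessible, each in its own variable. Matom\"aki's lemma handles exactly this: for nonnegative functions with prescribed marginal distributions, \(\sum |x||y|\) is at least the anti-monotone rearrangement value. Given the two marginal equidistributions, the lemma yields the lower bound, and \(C_i\) comes out as the numerical value of that rearrangement integral. Establishing the marginal law for the second factor, uniformly in the box, is the step I expect to be delicate. For \(i=3\) that law is the product measure of two Sato--Tate factors. It requires joint equidistribution in \((p_2,p_3)\) of \(\mathcal C(p_1\overline{p_3}^{\,2};p_2)\) and \(\mathcal C(p_1\overline{p_2}^{\,2};p_3)\) as \(p_1\) varies. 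This amounts to non-correlation of the symmetric power sheaves modulo the two distinct primes, which separate by the Chinese remainder theorem, with the completed sum bounded by \(\sqrt{p_2p_3}\).
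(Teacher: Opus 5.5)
Your overall architecture matches the paper's. Only the two marginal laws \eqref{eq:low-first-distribution} and \eqref{eq:low-second-distribution} are proved, and Matom\"aki's rearrangement inequality with the numerics of \cite[Section~4.2]{Xi1} then gives $C_i$. The mixed-moment identity $\sum U_k(\cdot)V_l(\cdot)=\delta_{k=0}\sum V_l+o(N_{\boldsymbol m})$ that you ``aim for'' is not needed (and, as you note yourself, not available), so drop it. The gaps are in how you establish the two marginals.

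First, counting $p_1$ in residue classes modulo $p_2\cdots p_i$ by Siegel--Walfisz or Bombieri--Vinogradov cannot work. By \eqref{eq:low-Xi-margins}, $p_2\geq P_1^{3/4}X^{\eps/2}$ when $i=2$ and $p_2\geq P_1^{1/2}X^{\eps/2}$ when $i=3$. So the modulus exceeds both ranges relative to the size $P_1$ of the variable, and you would in any case need every class of a single modulus. Instead, fix the small modulus and sum the trace function directly over the primes $p_1\in I_{m_1}$, whose range exceeds $p_2$ by a power of $X$; \cite[Theorem~1.5]{FKM} handles this.

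Second, you attach the margins \eqref{eq:low-Xi-margins} to the wrong factor. They are needed for $\mathcal C(p_2\cdots p_i;p_1)$. There $p_1$ is the modulus, and the variable runs over primes, or products of two primes, in a range shorter than $p_1$, so your complete-sum bound $O_k(\sqrt{p_1})$ gives nothing. For $i=2$ you need the FKM bound for primes in intervals of length at least $p_1^{3/4+\delta}$. For $i=3$ you need a Type~II bilinear bound modulo $p_1$ with $M=P_2$, $N=P_3$ \cite[Lemma~2]{Xi1}, saving $P_3^{-1/2}+P_1^{1/4}P_2^{-1/2}$.

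The genuine missing idea is the second marginal for $i=3$. Completing modulo $p_2p_3$ and bounding the complete sum by $\sqrt{p_2p_3}$ needs $P_1$ well beyond $\sqrt{p_2p_3}$, and for a sum over primes essentially beyond $(p_2p_3)^{3/4}$. But $\calR_3(\eps)$ contains points with $\alpha_1<3/7$ (e.g.\ near $(0.4,0.35,0.25)$), where $P_1<(P_2P_3)^{3/4}$. Near $\alpha_1=1/3$ one even has $P_1\approx(P_2P_3)^{1/2}$, and there your argument gives no saving.

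The paper proceeds as follows:
\begin{itemize}
\item It writes $\mathcal C(p_1;p_2p_3)=\mathcal C(p_1p_3;p_2)\,\mathcal C(p_1p_2;p_3)$. Your $\mathcal C(p_1\overline{p_3}^{\,2};p_2)$ is a slip, since $\mathcal C(a;q)$ already inverts and squares $a$.
\item It fixes $r=p_3$ and absorbs $U_k(\mathcal C(p_1r;p_2))$ into a bounded coefficient $\gamma_{p_1,p_2}$, periodic in $p_1$ modulo $p_2$.
\item It extracts cancellation only modulo $r$, from the bilinear structure in $(p_1,p_2)$, via \cite[Lemma~3]{Xi1}. The saving is $P_3^{-1/8}+P_2^{-1/4}P_3^{1/8}+(P_2/P_1)^{1/2}$.
\item The case $\ell=0$, $k\geq1$ falls back to the one-prime FKM bound modulo $p_2$.
\end{itemize}
Without this device, or an equivalent estimate for sums over primes to the composite modulus $p_2p_3$, you do not obtain the product Sato--Tate law on all boxes of $\mathfrak B_3(\eps)$. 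Hence you do not obtain the constant $C_3$.
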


\begin{proof}
We verify the uniform form of the two distribution statements in
\cite[Lemma~4]{Xi1} needed for these boxes. For every fixed
continuous function \(\Phi:[-1,1]\to\mathbb R\), these are
\begin{equation}\label{eq:low-first-distribution}
  \frac1{N_{\boldsymbol m}}
  \sum_{\boldsymbol p\in\mathcal P_i(\boldsymbol m)}
    \Phi\bigl(\mathcal C(p_2\cdots p_i;p_1)\bigr)
    =\int_{-1}^1\Phi(t)\,d\nu(t)+o_{\Phi,\eps}(1),
\end{equation}
\begin{equation}\label{eq:low-second-distribution}
\begin{aligned}
  &\frac1{N_{\boldsymbol m}}
  \sum_{\boldsymbol p\in\mathcal P_i(\boldsymbol m)}
    \Phi\bigl(\mathcal C(p_1;p_2\cdots p_i)\bigr)\\
  &\qquad=\int_{[-1,1]^{i-1}}
      \Phi(t_2\cdots t_i)\prod_{j=2}^i d\nu(t_j)
      +o_{\Phi,\eps}(1).
\end{aligned}
\end{equation}

For the estimates in one prime variable, let
\(I(P)=(P,P(1+\calL^{-1})]\). For \(k\geq1\) and
\((a,p)=1\), the function
\(x\mapsto U_k(\mathcal C(ax;p))\) on \(\mathbb F_p^\times\)
is the restriction of a non-exceptional geometrically irreducible
trace function of conductor bounded in terms of \(k\), uniformly
in \(a\); see \cite[proof of Theorem~5.2]{FKM}.
Applying \cite[Theorem~1.5, equation~(1.3)]{FKM} at the two
endpoints of \(I(P)\) therefore gives
\begin{equation}\label{eq:low-prime-trace}
  \left|
    \sum_{\substack{\ell\in\mathbb P\cap I(P)\\\ell\ne p}}
      U_k(\mathcal C(a\ell;p))
  \right|
  \ll_{k,\sigma}P(1+p/P)^{1/12}p^{-\sigma/2},
  \qquad 0<\sigma<\frac1{24}.
\end{equation}
Omitting the possible term \(\ell=p\) changes the endpoint
sums by \(O_k(1)\), which is absorbed in this bound.

For \(i=2\), apply \eqref{eq:low-prime-trace} first with
\((p,P,a)=(p_1,P_2,1)\), and then with
\((p,P,a)=(p_2,P_1,1)\). Taking
\(\sigma=(1-\eps)/24\), the size conditions
\eqref{eq:low-power-separation}--\eqref{eq:low-Xi-margins} give
\[
\begin{aligned}
  (1+p_1/P_2)^{1/12}p_1^{-\sigma/2}
    &\ll p_1^{\eps/48}X^{-\eps/24}
    \ll X^{-\eps/48},\\
  (1+p_2/P_1)^{1/12}p_2^{-\sigma/2}
    &\ll X^{-\eps/48}.
\end{aligned}
\]
After division by the appropriate prime count in
\eqref{eq:low-interval-pnt}, both normalized averages are
\(O_{k,\eps}(\calL^2X^{-\eps/48})\).
Averaging each bound over the remaining prime proves
\eqref{eq:low-first-distribution} and
\eqref{eq:low-second-distribution} for polynomial \(\Phi\)
when \(i=2\).

For \(i=3\), fix a prime \(p_1\in I_{m_1}\). In
\cite[Lemma~2]{Xi1}, take the prime modulus to be \(p_1\),
\(M=P_2\), \(N=P_3\), and the two coefficient sequences to be
\[
  a_m=\one_{\mathbb P\cap I_{m_2}}(m),\qquad
  b_n=\one_{\mathbb P\cap I_{m_3}}(n).
\]
Their \(\ell^2\)-norms are \(s_2^{1/2}\) and \(s_3^{1/2}\),
respectively, and \(M,N<p_1\) by
\eqref{eq:low-power-separation}. The lemma yields, for every
fixed \(k\geq1\),
\begin{align}
  &\frac1{s_2s_3}
  \left|\sum_{\substack{p_2\in\mathbb P\cap I_{m_2}\\
                         p_3\in\mathbb P\cap I_{m_3}}}
    U_k\bigl(\mathcal C(p_2p_3;p_1)\bigr)\right|
    \notag\\
  &\qquad\ll_{k,\eps}\calL^2
    \left(P_3^{-1/2}
      +P_1^{1/4}P_2^{-1/2}\calL^{1/2}\right)
    \ll_{k,\eps}\calL^{5/2}X^{-\eps/4}.
    \label{eq:low-bilinear-specialization}
\end{align}
Averaging over \(p_1\) proves
\eqref{eq:low-first-distribution} for polynomial \(\Phi\).

For the second distribution, write
\[
  \mathcal C(p_1;p_2p_3)
    =\mathcal C(p_1p_3;p_2)\mathcal C(p_1p_2;p_3).
\]
Fix a prime \(r=p_3\in I_{m_3}\), and let \(k\geq0\), \(\ell\geq1\)
be fixed. Apply \cite[Lemma~3]{Xi1} with its modulus \(r\),
one prime divisor, \(M=P_1\), \(N=P_2\), and coefficients
\[
\begin{gathered}
  a_m=\one_{\mathbb P\cap I_{m_1}}(m),\qquad
  b_n=\one_{\mathbb P\cap I_{m_2}}(n),\\
  \gamma_{m,n}=
  \begin{cases}
    \displaystyle\frac{U_k(\mathcal C(mr;n))}{k+1},
      &n\in\mathbb P\cap I_{m_2},\ (mr,n)=1,\\[4pt]
    0,&\text{otherwise}.
  \end{cases}
\end{gathered}
\]
For each \(n\), the coefficient \(\gamma_{m,n}\) is periodic
in \(m\) modulo \(n\), and \(\|\gamma\|_\infty\leq1\).
The symmetric-power degree in the lemma is \(\ell\).
Since \(\|a\|_2=s_1^{1/2}\) and \(\|b\|_2=s_2^{1/2}\),
we obtain
\begin{align}
  &\frac1{s_1s_2}
  \left|\sum_{\substack{p_1\in\mathbb P\cap I_{m_1}\\
                         p_2\in\mathbb P\cap I_{m_2}}}
    U_k\bigl(\mathcal C(p_1r;p_2)\bigr)
    U_\ell\bigl(\mathcal C(p_1p_2;r)\bigr)\right|
    \notag\\
  &\qquad\ll_{k,\ell,\eps}\calL^2
    \left(P_3^{-1/8}+P_2^{-1/4}P_3^{1/8}
      +(P_2/P_1)^{1/2}\right)
    \ll_{k,\ell,\eps}\calL^2X^{-\eps/4}.
    \label{eq:low-composite-specialization}
\end{align}
If \(\ell=0\) and \(k\geq1\), fix \(p_2,p_3\) and apply
\eqref{eq:low-prime-trace} with \((p,P,a)=(p_2,P_1,p_3)\).
The resulting normalized average over \(p_1\) is
\(o_{k,\eps}(1)\),
since \(P_1>p_2\geq X^{2\eps}\). If \(k=\ell=0\),
the average is exactly \(1\).
Expanding any polynomial in the two local factors in the basis
\(U_k(t_2)U_\ell(t_3)\), and averaging over \(p_3\), proves
\eqref{eq:low-second-distribution} for polynomial \(\Phi\).
Uniform polynomial approximation on \([-1,1]\) proves both
distribution statements for continuous \(\Phi\).

Apply \cite[Lemma~2.3]{Matomaki} to the absolute values of
the two factors in \eqref{eq:low-factorization}.
Using the numerical lower bounds in \cite[Section~4.2]{Xi1},
we obtain
\[
  \frac1{N_{\boldsymbol m}}
  \sum_{\boldsymbol p\in\mathcal P_i(\boldsymbol m)}
    |\mathcal C(1;p_1\cdots p_i)|
  \geq C_i-o_\eps(1).
\]
The error is uniform: any sequence of boxes satisfies the same
two limiting distribution statements, and hence the same
rearrangement bound. Only these two marginal distributions
are used. Finally,
\(|\Kl(1;p_1\cdots p_i)|=2^i|\mathcal C(1;p_1\cdots p_i)|\),
which proves the lemma.
\end{proof}

\subsection{Insertion of the sieve weight}
\label{subsec:low-summation}

\begin{proof}[Proof of Proposition~\ref{prop:low}]
Fix \(i\in\{2,3\}\). Since
\(\log R=\calL/4-B\log\calL\), the Lipschitz continuity of
the zero extension of \(F\) gives, on each box,
\begin{equation}\label{eq:WandL2}
  W_F(p_1\cdots p_i)
    =L_i(F;\boldsymbol\alpha(\boldsymbol m))^2
      +O_F\!\left(\frac{\log\calL}{\calL}\right).
\end{equation}
Indeed, the divisor sum defining \(W_F\) is the sum over
subsets of \(\{1,\ldots,i\}\), and
\(\log p_j/\calL=\alpha_j(\boldsymbol m)+O(\calL^{-1})\).
The zero extension accounts for divisors exceeding \(R\),
so the estimate also holds when a subset sum crosses the cutoff.

On \(B_{\boldsymbol m}\), the oscillation of
\(g(p_1\cdots p_i/X)\) is \(O_g(\calL^{-1})\).
Replace this factor by the nonnegative constant
\(g(P_1\cdots P_i/X)\), apply
Lemma~\ref{lem:low-absolute-box} and \eqref{eq:WandL2},
and then restore \(g\). The Weil bound gives
\begin{align}
  &\sum_{\boldsymbol p\in\mathcal P_i(\boldsymbol m)}
    g\!\left(\frac{p_1\cdots p_i}{X}\right)
    W_F(p_1\cdots p_i)|\Kl(1;p_1\cdots p_i)|
    \notag\\
  &\quad\geq
    2^iC_iL_i(F;\boldsymbol\alpha(\boldsymbol m))^2
    \sum_{\boldsymbol p\in\mathcal P_i(\boldsymbol m)}
      g\!\left(\frac{p_1\cdots p_i}{X}\right)
      -o_{\eps,F,g}(1)N_{\boldsymbol m}.
    \label{eq:low-weighted-box}
\end{align}
By \eqref{eq:low-box-count}, these errors sum to
\(o_{\eps,F,g}(X/\calL)\).

It remains to evaluate the weighted prime count. By
\eqref{eq:low-interval-pnt} and the same bound for the
oscillation of \(g\),
\begin{align*}
  &\sum_{\boldsymbol m\in\mathfrak B_i(\eps)}
    L_i(F;\boldsymbol\alpha(\boldsymbol m))^2
    \sum_{\boldsymbol p\in\mathcal P_i(\boldsymbol m)}
      g\!\left(\frac{p_1\cdots p_i}{X}\right)\\
  &\quad=
    \sum_{\boldsymbol m\in\mathfrak B_i(\eps)}
      L_i(F;\boldsymbol\alpha(\boldsymbol m))^2
      \int_{B_{\boldsymbol m}}
        g\!\left(\frac{t_1\cdots t_i}{X}\right)
        \prod_{j=1}^i\frac{dt_j}{\log t_j}
      +O_{\eps,F,g}(X/\calL^2).
\end{align*}
Make the change of variables
\[
  u=t_1\cdots t_i/X,\qquad
  \alpha_j=\frac{\log t_j}{\calL}\quad(2\leq j\leq i),
  \qquad \alpha_1=1-\sum_{j=2}^i\alpha_j.
\]
Its Jacobian is
\begin{equation}\label{eq:low-log-jacobian}
  \prod_{j=1}^i\frac{dt_j}{\log t_j}
    =\frac{X}{\calL}
      \frac{du\,d\alpha_2\cdots d\alpha_i}
      {(\alpha_1+\calL^{-1}\log u)\alpha_2\cdots\alpha_i}.
\end{equation}
For fixed \(m_2,\ldots,m_i\) whose representative belongs
to \(\calR_i(\eps)\), summation over \(m_1\) covers all
\(u\in\operatorname{supp}(g)\): the corresponding value
\(t_1=uX^{\alpha_1}\) exceeds \(X^\eps\) for large \(X\).
In the \((\alpha_2,\ldots,\alpha_i)\)-coordinates, the
intervals have length
\(\log(1+\calL^{-1})/\calL=O(\calL^{-2})\).
The union of the resulting cells with representatives in
\(\calR_i(\eps)\) differs from that region only within
boundary strips of measure \(O_\eps(\calL^{-2})\).
On a fixed neighborhood of this compact region the denominators
are bounded away from zero and \(L_i(F;\boldsymbol\alpha)^2\)
is Lipschitz. Thus, using \(\int_1^2g(u)\,du=1\), we obtain
\begin{align}
  &\sum_{\boldsymbol m\in\mathfrak B_i(\eps)}
    L_i(F;\boldsymbol\alpha(\boldsymbol m))^2
    \sum_{\boldsymbol p\in\mathcal P_i(\boldsymbol m)}
      g\!\left(\frac{p_1\cdots p_i}{X}\right)
    \notag\\
  &\quad=
    \frac{X}{\calL}\int_{\calR_i(\eps)}
      \frac{L_i(F;\boldsymbol\alpha)^2}
           {\alpha_1\cdots\alpha_i}\,
      d\alpha_2\cdots d\alpha_i
      +O_{\eps,F,g}(X/\calL^2).
    \label{eq:low-limiting-integral}
\end{align}

All summands in \(H_{\leq4}(X;F)\) are nonnegative, and the
tuples used for \(i=2,3\) represent distinct admissible moduli.
Summing \eqref{eq:low-weighted-box} and applying
\eqref{eq:low-limiting-integral} proves
Proposition~\ref{prop:low} with
\[
  \delta_{2,F}(\eps)
    =\sum_{i=2}^3 2^iC_i
      \int_{\calR_i\setminus\calR_i(\eps)}
        \frac{L_i(F;\boldsymbol\alpha)^2}
             {\alpha_1\cdots\alpha_i}\,
        d\alpha_2\cdots d\alpha_i.
\]
The integrable majorant \eqref{eq:low-integrable-majorant}
and dominated convergence show that
\(\delta_{2,F}(\eps)\to0\) as \(\eps\to0^+\).
Throughout, \(X\to\infty\) is taken first, with \(\eps\) fixed.
\end{proof}

For \(F(x)=x^2\), write
\(\delta_2(\eps)=\delta_{2,x^2}(\eps)\).
\section{A pairing of prime factors and second moments}\label{sec:pair}

\subsection{Squared traces}

For square-free \(q\geq2\), define
\[
  \Omega_q(a)=\Kl(\overline a^{\,2};q),\qquad
  \Xi_q(a)=|\Omega_q(a)|^2
  \quad ((a,q)=1),
\]
and set \(\Xi_q(a)=0\) otherwise. For \(q=1\), use the conventions
\(\Omega_1(a)=\Xi_1(a)=1\). The following lemma supplies the
admissibility required by Xi's generalized Barban--Davenport--Halberstam
theorem.

\begin{lemma}\label{lem:square-trace}
For square-free \(q\),
\begin{equation}\label{eq:Xi_average}
  \mathfrak m(q):=
  \frac1{\varphi(q)}\sum_{a\bmod q}^{*}\Xi_q(a)
  =\prod_{p\mid q}\bigl(1+O(p^{-1/2})\bigr).
\end{equation}
If every prime divisor of \(q\leq3X\) is at least \(X^\eps\), then
\(\mathfrak m(q)=1+O_\eps(X^{-\eps/2})\).
The family \((\Xi_q)\) satisfies the \((1,C)\)-admissibility
conditions of \cite[Definition~4.2]{XiBDH} for square-free
\(q\geq2\), with an absolute constant \(C>0\).
The modulus \(q=1\) is treated separately below.
In particular,
\[
  \norm{\Xi_q}_\infty\leq4^{\om(q)}.
\]
Writing
\[
  \widehat\Xi_q(\chi)=\frac1{\sqrt q}
           \sum_{a\bmod q}^{*}\overline\chi(a)\Xi_q(a),
\]
one has, for every primitive character \(\chi\bmod q\),
\begin{equation}\label{eq:Xi_twisted_bound}
  |\widehat\Xi_q(\chi)|
  \ll C_0^{\om(q)}\ll\tau(q)^{C_1},
\end{equation}
with absolute constants \(C_0,C_1\).
\end{lemma}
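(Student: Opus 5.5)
The proof is local: everything rests on the Chinese remainder theorem and known facts about prime-modulus Kloosterman sheaves.

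\emph{Step 1: factorization.} For square-free \(q=q_1q_2\), twisted multiplicativity gives
\[
\Omega_q(a)=\Kl(\overline{a}^{\,2}\overline{q_2}^{\,2};q_1)\,\Kl(\overline{a}^{\,2}\overline{q_1}^{\,2};q_2).
\]
Hence \(\Xi_q(a)=\prod_{p\mid q}\Xi_p(a\,q/p)\). The map \(a\mapsto(a\bmod p)_{p\mid q}\) is a bijection of \((\mathbb Z/q)^\times\), and for each \(p\) the map \(a\mapsto a\,q/p\) is a bijection of \(\mathbb F_p^\times\). Therefore both the mean and the character transforms factor over \(p\mid q\):
\[
\mathfrak m(q)=\prod_{p\mid q}\mathfrak m(p),
\qquad
\widehat\Xi_q(\chi)=\prod_{p\mid q}\chi_p(q/p)\,\widehat\Xi_p(\chi_p).
\]
It therefore suffices to treat prime \(q=p\).

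\emph{Step 2: primes.} Weil's bound gives \(\Xi_p\le4\), so \(\norm{\Xi_q}_\infty\le4^{\om(q)}\). Write \(\Kl(b;p)=2\cos\theta_b\). Then
\[
\Xi_p(a)=4\cos^2\theta=1+U_2(\cos\theta).
\]
The function \(b\mapsto U_2(\Kl(b;p)/2)\) is the trace function of \(\mathrm{Sym}^2\) of the Kloosterman sheaf. This sheaf is geometrically irreducible, of bounded conductor, and not Kummer. Katz's results, or direct evaluation of \(\sum_b|\Kl(b;p)|^2=p-1-\ldots\), give \(\sum_b U_2(\cdot)\ll\sqrt p\).

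Substitution \(b=\overline a^{\,2}\) is two-to-one on squares. The resulting sum over \(a\) is
\[
\sum_b(1+\chi_2(b))\,U_2(\cdot).
\]
The quadratic-character twist is again a sum of an irreducible sheaf with no trivial component, hence \(O(\sqrt p)\). This gives \(\mathfrak m(p)=1+O(p^{-1/2})\). The product statement follows, and when all \(p\ge X^\eps\) with \(\om(q)\ll1/\eps\) we get \(\mathfrak m(q)=1+O_\eps(X^{-\eps/2})\).

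For a primitive \(\chi\bmod p\), the twisted sum is
\[
\sqrt p\,\widehat\Xi_p(\chi)=\sum_a\overline\chi(a)+\sum_a\overline\chi(a)\,U_2(\cdot).
\]
The first sum vanishes. The second is a twist of a pullback of \(\mathrm{Sym}^2\mathcal K\ell\) under \(a\mapsto a^{-2}\) by the Kummer sheaf \(\mathcal L_{\overline\chi}\). This is \(O(\sqrt p)\) with an absolute constant, provided no geometric component of the pullback is isomorphic to \(\mathcal L_\chi\). That holds because \(\mathrm{Sym}^2\mathcal K\ell\) has rank \(3\) and is wildly ramified at \(\infty\), so it has no rank-one tame Kummer pieces, even after pullback by \(x^{-2}\) (local monodromy at \(0\) stays unipotent). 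Hence \(|\widehat\Xi_p(\chi)|\le C_0\). Multiplying over \(p\mid q\) gives \(C_0^{\om(q)}\le\tau(q)^{C_1}\).

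\emph{Step 3: admissibility.} The \((1,C)\)-admissibility in \cite[Definition~4.2]{XiBDH} asks for a sup-norm bound, bounded conductor, and square-root cancellation under twists and in complete sums, in multiplicative form. All of these follow from the factorization and the prime bounds above.

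\emph{Main obstacle.} The delicate step is the irreducibility/non-Kummer verification after the pullback by \(a\mapsto\overline a^{\,2}\). I would handle it via the local monodromy at \(\infty\), which has all breaks \(\tfrac12\), becoming \(1\) after the degree-\(2\) pullback, and therefore cannot contain a tame character.
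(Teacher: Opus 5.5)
Your reduction is the same as the paper's. Twisted multiplicativity and the Chinese remainder theorem factor $\mathfrak m(q)$ and $\widehat\Xi_q(\chi)$ over $p\mid q$, and your formulas coincide with the paper's. At a prime one writes $\Xi_p=1+U_2(\cos\theta)$ and needs $\sum_a^{*}\overline\chi(a)U_2(\Kl(\overline a^{\,2};p)/2)\ll\sqrt p$ uniformly in $\chi$. The paper simply quotes \cite[Lemma~3.2]{XiBDH} for this prime-level bound. You derive it from the sheaf $\mathrm{Sym}^2\mathcal K\ell_2$ instead. Your treatment of the trivial character is correct: the substitution $b=\overline a^{\,2}$ produces the weight $1+\chi_2(b)$, leaving Kummer twists of a geometrically irreducible rank-$3$ sheaf.

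Your justification for the step you call delicate is wrong, however.
\begin{itemize}
\item $\mathrm{Sym}^2\mathcal K\ell_2$ is not totally wild at $\infty$. Its $I_\infty$-representation is $\mathcal L_{\chi_2}\oplus(\text{rank }2,\ \text{breaks }\tfrac12,\tfrac12)$. Three breaks equal to $\tfrac12$ are impossible anyway, since the Swan conductor $\tfrac32$ would not be an integer. It is in fact $1$, consistent with $\sum_{b\neq0}\Kl(b;p)^2=p-1-1/p$.
\item The map $a\mapsto a^{-2}$ interchanges $0$ and $\infty$. At $a=0$ the pullback has a trivial tame summand plus breaks $1$. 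It is at $a=\infty$ that the monodromy is unipotent.
\item Wild ramification at one point does not by itself exclude a global rank-one tame summand.
\end{itemize}

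The conclusion you need is nevertheless true. The geometric monodromy of $\mathrm{Sym}^2\mathcal K\ell_2$ is $SO_3$, which is connected. So its pullback under the finite \'etale map $a\mapsto a^{-2}$ stays geometrically irreducible of rank $3$ and cannot be $\mathcal L_\chi$. Equivalently, at $a=\infty$ the monodromy is a unipotent Jordan block, while $\mathcal L_\chi$ is ramified there for $\chi\neq1$.

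You can also bypass the pullback entirely. For odd $\chi$ the sum vanishes under $a\mapsto-a$. For even $\chi=\eta^2$ one has $\overline\chi(a)=\eta(\overline a^{\,2})$, so the sum equals $\sum_b(\eta(b)+\eta\chi_2(b))U_2(\Kl(b;p)/2)$. This is handled exactly like your trivial-character case.

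Two small points should also be stated: $p=2$ is trivial, and a primitive character modulo square-free $q$ has all local components nontrivial. With these and the repair above, your argument is a self-contained version of the paper's proof.
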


\begin{proof}
Write \(\Kl(a;p)=2\cos\theta_p(a)\).  Since
\[
  \Xi_p(a)=1+\operatorname{sym}_2
                     (\theta_p(\overline a^{\,2})),
  \qquad \operatorname{sym}_2(\theta)=4\cos^2\theta-1,
\]
\cite[Lemma~3.2]{XiBDH} gives
\[
  \sum_{a\bmod p}^{*}\overline\chi(a)
       \operatorname{sym}_2(\theta_p(\overline a^{\,2}))
  \ll\sqrt p
\]
uniformly over multiplicative characters \(\chi\bmod p\).
For the trivial character this proves
\(\sum_a^*\Xi_p(a)=p-1+O(\sqrt p)\); for every nontrivial
character it gives \(|\widehat\Xi_p(\chi)|\ll1\).

Twisted multiplicativity~\cite[(1.59)]{IK} yields
\[
  \Xi_{q_1q_2}(a)=\Xi_{q_1}(aq_2)\Xi_{q_2}(aq_1)
  \qquad ((q_1,q_2)=1).
\]
The Chinese remainder theorem consequently gives
\[
  \mathfrak m(q_1q_2)=\mathfrak m(q_1)\mathfrak m(q_2),\qquad
  \widehat\Xi_{q_1q_2}(\chi_1\chi_2)
   =\chi_1(q_2)\chi_2(q_1)
      \widehat\Xi_{q_1}(\chi_1)\widehat\Xi_{q_2}(\chi_2).
\]
These identities prove~\eqref{eq:Xi_average} and
\eqref{eq:Xi_twisted_bound}, since every local component of a primitive
character at a prime divisor of its square-free modulus is nontrivial.
The number of prime divisors of a modulus in the stated rough range is
bounded in terms of \(\eps\), which proves the assertion about its mean.
Finally, Weil's bound gives
\(\norm{\Xi_q}_\infty\leq4^{\om(q)}=\tau(q)^2\).
Together with twisted multiplicativity and
\eqref{eq:Xi_twisted_bound}, this verifies the conditions in
\cite[Definition~4.2]{XiBDH} for \(q\geq2\), after increasing \(C\)
if necessary. For \(q=1\), one has \(\mathfrak m(1)=1\), so the
centered function \(\Xi_1-\mathfrak m(1)\) vanishes identically.
\end{proof}

\subsection{Reserving the largest prime factor}

\begin{lemma}\label{lem:pairing}
Let \(n=p_1\cdots p_r\), with \(p_1<\cdots<p_r\) and \(r\geq5\),
and put
\begin{equation}\label{eq:alternating-factors}
  d=\prod_{\substack{1\leq i<r\\i\text{ odd}}}p_i,
  \qquad
  e=\prod_{\substack{1\leq i<r\\i\text{ even}}}p_i,
  \qquad f=p_r.
\end{equation}
Then
\begin{equation}\label{eq:pair-pointwise}
  |\Kl(1;n)|\leq\Xi_d(ef)+\Xi_e(df).
\end{equation}
If \(r\) is odd, then \(ef/d\geq p_r\) and \(df/e\geq p_1\).
If \(r\) is even, then \(df/e\geq p_1p_r\) and
\(ef/d\geq p_2/p_1\).
\end{lemma}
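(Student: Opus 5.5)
The plan is to combine twisted multiplicativity of Kloosterman sums with the Weil bound at the reserved prime \(f=p_r\), and then use AM--GM. The size statements are elementary telescoping comparisons of consecutive primes.

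\emph{Step 1 (factorization).} The primes are distinct, so \(d\), \(e\), \(f\) are pairwise coprime and \(n=def\). Since \(r\geq5\), both \(d\) and \(e\) are \(\geq2\); \(d\) contains \(p_1\) and \(e\) contains \(p_2\). Twisted multiplicativity~\cite[(1.59)]{IK}, in the normalized form
\[
\Kl(m;q_1q_2)=\Kl(m\overline{q_2}^{\,2};q_1)\Kl(m\overline{q_1}^{\,2};q_2)\qquad((q_1,q_2)=1),
\]
is applied twice: first to the splitting \(n=d\cdot(ef)\), then to \(ef=e\cdot f\). This gives
\[
\Kl(1;n)=\Kl(\overline{ef}^{\,2};d)\,\Kl(\overline{df}^{\,2};e)\,\Kl(\overline{de}^{\,2};f)
=\Omega_d(ef)\,\Omega_e(df)\,\Kl(\overline{de}^{\,2};f).
\]
In the second application, the argument \(\overline{d}^{\,2}\) combines with \(\overline{e}^{\,2}\) or \(\overline{f}^{\,2}\) as required.

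\emph{Step 2 (pointwise bound).} By the Weil bound at the prime \(f\), \(|\Kl(\overline{de}^{\,2};f)|\leq2\). Hence
\[
|\Kl(1;n)|\leq 2|\Omega_d(ef)||\Omega_e(df)|\leq|\Omega_d(ef)|^2+|\Omega_e(df)|^2=\Xi_d(ef)+\Xi_e(df).
\]
The arguments are coprime to the respective moduli, so the definition of \(\Xi\) applies. This proves \eqref{eq:pair-pointwise}.

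\emph{Step 3 (size relations).} Here we group consecutive primes, using \(p_{j+1}/p_j>1\).

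If \(r\) is odd, then \(r-1\) is even. In this case \(d=p_1p_3\cdots p_{r-2}\) and \(e=p_2p_4\cdots p_{r-1}\), so
\[
e/d=\prod_{k}(p_{2k}/p_{2k-1})>1,
\]
which gives \(ef/d>p_r\). Also
\[
df/e=p_1\,(p_3/p_2)(p_5/p_4)\cdots(p_r/p_{r-1})\geq p_1.
\]

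If \(r\) is even, then \(d=p_1p_3\cdots p_{r-1}\) and \(e=p_2\cdots p_{r-2}\). Hence
\[
df/e=p_1p_r\prod_{k=1}^{(r-2)/2}(p_{2k+1}/p_{2k})\geq p_1p_r,
\]
and
\[
ef/d=(p_2/p_1)(p_4/p_3)\cdots(p_{r-2}/p_{r-3})(p_r/p_{r-1})\geq p_2/p_1.
\]

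No step is a real obstacle. The only care needed is tracking the inverse-square twists correctly in the iterated twisted multiplicativity, and getting the parity bookkeeping of the index sets right in Step 3.
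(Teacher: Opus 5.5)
Your proposal is correct and follows the paper's proof essentially step for step. Both use twisted multiplicativity to get \(\Kl(1;n)=\Omega_d(ef)\Omega_e(df)\Omega_f(de)\), then the Weil bound at the prime \(f\) together with \(2|uv|\leq|u|^2+|v|^2\), then telescoping ratios of consecutive primes for the size relations. The only cosmetic difference is in the odd case: the paper bounds \(df/e\) through the upper estimate \(e/d\leq p_{r-1}/p_1\), while you regroup \(df/e\) directly as \(p_1\prod_k(p_{2k+1}/p_{2k})\), and both are immediate.
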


\begin{proof}
Twisted multiplicativity gives
\(\Kl(1;n)=\Omega_d(ef)\Omega_e(df)\Omega_f(de)\).
Since \(f\) is prime, Weil's bound and
\(2|uv|\leq|u|^2+|v|^2\) prove~\eqref{eq:pair-pointwise}.
For \(r=2s+1\),
\[
  1\leq\frac ed
   =\prod_{j=1}^{s}\frac{p_{2j}}{p_{2j-1}}
   \leq\frac{p_{2s}}{p_1},
\]
which proves the two odd-case inequalities.  For \(r=2s\),
\[
  \frac de=p_1\prod_{j=1}^{s-1}\frac{p_{2j+1}}{p_{2j}}\geq p_1,
  \qquad
  \frac{ef}{d}=\prod_{j=1}^{s}\frac{p_{2j}}{p_{2j-1}}
       \geq\frac{p_2}{p_1}.
\]
These give the remaining assertions.
\end{proof}

\section{Second moments on prime boxes}\label{sec:transfer}

Throughout this section \(0<\eps<1\) and \(r\geq5\) are fixed,
and \(\Delta=\calL^{-1}\).  We freeze the sieve weight on geometric prime
boxes and apply Xi's mean-value theorem to unweighted prime products.
For even \(r\), a negligible set where the two smallest primes are
close on the logarithmic scale is removed before applying that theorem.

\subsection{Siegel--Walfisz estimates for prime products}

A sequence \(\boldsymbol a=(a_m)\), supported in \([M,2M]\), satisfies
the Siegel--Walfisz condition if, for some fixed \(C>0\) and every
\(A>0\),
\begin{equation}\label{eq:SW-definition}
\begin{split}
 &\sum_{\substack{m\leq z\\m\equiv b\;(\bmod w)\\(m,h)=1}}a_m
 -\frac1{\varphi(w)}
  \sum_{\substack{m\leq z\\(m,hw)=1}}a_m
 \\
 &\hspace{25mm}\ll_A
 \norm{\boldsymbol a}_2M^{1/2}\tau(h)^C\calL^{-A},
\end{split}
\end{equation}
uniformly for \(z,w,h\geq1\) and \((b,w)=1\), \(b\neq0\).
This is the supported form of \cite[Definition~4.1]{XiBDH}: the sums
vanish for \(z<M\), and for \(z>2M\) they are unchanged upon replacing
\(z\) by \(2M\).  The logarithmic scales are comparable in all the
applications below.

\begin{lemma}\label{lem:prime-SW}
Fix \(s\geq1\).  Let \(I_j=[Y_j,\e^\Delta Y_j)\)
\((1\leq j\leq s)\) be pairwise disjoint intervals contained in
\([X^\eps,3X]\), and define
\[
  a_m=\sum_{\substack{m=p_1\cdots p_s\\p_j\in I_j}}1,
  \qquad M=Y_1\cdots Y_s.
\]
Then \(\boldsymbol a\) satisfies~\eqref{eq:SW-definition}, with
\(C=1\) and implied constants depending only on \(A,\eps,s\),
uniformly in the intervals and their labeling. Moreover,
\begin{equation}\label{eq:prime-product-norm}
  \norm{\boldsymbol a}_2\asymp_{\eps,s}M^{1/2}\calL^{-s},
  \qquad
  \sum_m a_m\asymp_{\eps,s}M\calL^{-2s}.
\end{equation}
\end{lemma}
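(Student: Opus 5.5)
The plan is to treat the two norm estimates first, then the Siegel--Walfisz condition by induction on \(s\) via the prime number theorem in arithmetic progressions.

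\emph{Norms.} Since the intervals are pairwise disjoint, each \(m\) in the support has at most \(s!\) representations; in fact the labeled tuple is determined by \(m\) up to the permutations compatible with the intervals. So \(a_m\leq s!\) and \(\norm{\boldsymbol a}_2^2\asymp_s\sum_m a_m\). The prime number theorem on each short interval, as in \eqref{eq:low-interval-pnt}, gives
\[
\#(\mathbb P\cap I_j)\asymp_\eps Y_j\Delta/\log Y_j\asymp_\eps Y_j\calL^{-2},
\]
because \(\log Y_j\asymp_\eps\calL\). Multiplying yields \(\sum_m a_m\asymp M\calL^{-2s}\), and hence \(\norm{\boldsymbol a}_2\asymp M^{1/2}\calL^{-s}\). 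The support lies in \([M,\e^{s\Delta}M]\subset[M,2M]\) for large \(X\).

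\emph{Siegel--Walfisz.} The target right-hand side is \(M\calL^{-s-A}\tau(h)\). The condition \((m,h)=1\) means \(p_j\nmid h\) for all \(j\). Since every \(p_j\geq X^\eps\), at most \(O_\eps(1)\) primes of \(h\) in the relevant range can divide any \(p_j\). I will handle this by inclusion--exclusion over the primes \(\ell\mid h\) with \(\ell\in\bigcup I_j\): removing a single prime changes a sum by at most \(\prod_{k\neq j}\#(\mathbb P\cap I_k)\ll M\calL^{-2s}/Y_j\), which is negligible. The number of such primes is \(\leq\om(h)\leq\tau(h)\), which accounts for the \(\tau(h)^C\) factor.

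With \(h\) removed, write the sum over \(m\leq z\), \(m\equiv b\pmod w\) as a sum over \(p_1,\dots,p_s\). The constraint \(m\leq z\) only bites when \(z\in[M,\e^{s\Delta}M]\). I will split off one variable, say \(p_1\) with \(Y_1\) maximal so that \(Y_1\geq M^{1/s}\geq X^{\eps}\). For fixed \(p_2,\dots,p_s\) coprime to \(w\), the condition becomes
\[
p_1\equiv b\,\overline{p_2\cdots p_s}\pmod w,\qquad p_1\in I_1\cap(0,z/(p_2\cdots p_s)].
\]
This is a sum over primes in an interval in one residue class. The Siegel--Walfisz theorem gives the expected main term \(\pi(\cdot;\text{interval})/\varphi(w)\) with error \(\ll Y_1\calL^{-A'}\), uniformly for \(w\leq\calL^{A''}\). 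Summing over the other primes gives error \(\ll M\calL^{-A'}\), which is acceptable after enlarging \(A'\). The main term matches the second sum in \eqref{eq:SW-definition} once the terms with \(p_j\mid w\) are discarded, again by the single-prime bound above.

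\emph{Main obstacle: large moduli.} The hard case is when \(w>\calL^{A''}\). There I use the trivial bound \(\sum_{m\equiv b\,(w)}a_m\ll\) (count of \(p_1\equiv c\bmod w\) in \(I_1\)) times the count of the rest. By Brun--Titchmarsh, or trivially \(\ll Y_1\Delta/w+1\), this gives
\[
\ll M\calL^{-2s+2}/w+M/Y_1\ll M\calL^{-A}
\]
when \(w\geq\calL^{A''}\), since \(Y_1\geq X^\eps\). The subtracted term \(\varphi(w)^{-1}\sum a_m\) is also that small. The remaining subtlety is \(w\) beyond \(Y_1\) itself. There the residue class contains at most one value of \(p_1\) per choice of the others, so the contribution is \(\ll M/Y_1\), again negligible. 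All constants depend only on \(A,\eps,s\), which gives uniformity in the intervals and their labeling.
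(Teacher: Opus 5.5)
Your proposal is correct and follows essentially the same route as the paper. The paper also uses the prime number theorem on the short intervals for the norms, then isolates one prime variable: it takes \(p_s\), while you take the one with the largest interval, and either works since every \(Y_j\geq X^\eps\). Both arguments then apply Siegel--Walfisz for \(w\leq\calL^{D}\), use the trivial count \(O(1+Y\Delta/w)\) per residue class together with \(1/\varphi(w)\ll\calL^{-D}\log\calL\) for larger \(w\), and absorb the primes dividing \(h\) into an \(O(\om(h))\) correction. There are two harmless slips: disjointness of the intervals actually forces \(a_m\in\{0,1\}\), and fixing one prime leaves \(\asymp(M/Y_j)\calL^{-2s+2}\) tuples rather than \(M\calL^{-2s}/Y_j\), which is still negligible because \(Y_j\geq X^\eps\).
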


\begin{proof}
Disjointness gives unique representations, so \(a_m\in\{0,1\}\).
The prime number theorem, with an arbitrary logarithmic saving, gives
\(\#(I_j\cap\mathbb P)\asymp_\eps Y_j\calL^{-2}\), proving
\eqref{eq:prime-product-norm}.  The support is contained in
\([M,\e^{s\Delta}M)\subseteq[M,2M]\) for large \(X\).

Fix \(A>0\), and put \(T=M\calL^{-s-A}\), which is comparable to
the right side of~\eqref{eq:SW-definition} without \(\tau(h)\).
Write \(P=p_1\cdots p_{s-1}\), using \(P=1\) when \(s=1\).
There are \(O_{\eps,s}((M/Y_s)\calL^{-2s+2})\) such tuples.
Only those with \((P,hw)=1\) contribute to the discrepancy.
For each of them, the last prime belongs to
\(J_P=I_s\cap[1,z/P]\) and to the reduced residue class
\(b\overline P\pmod w\).

For \(w\leq\calL^D\), the Siegel--Walfisz theorem
\cite[Corollary~5.29]{IK}, applied at the endpoints of \(J_P\),
gives an error \(O_{\eps,D,A_0}(Y_s\calL^{-A_0})\) for every
\(A_0>0\).  All primes in \(I_s\) are coprime to \(w\) for large
\(X\).  Removing the primes dividing \(h\) changes the discrepancy
by \(O(\om(h))=O(\tau(h))\).  Summation over \(P\) therefore gives
\[
  O_{\eps,s,D,A_0}\!\left(
     M\calL^{-2s+2-A_0}
     +\tau(h)\frac{M}{Y_s}\calL^{-2s+2}\right).
\]
This is \(O(\tau(h)T)\) if \(A_0>A+3\), since \(Y_s\geq X^\eps\).

For \(w>\calL^D\), a residue class contains at most
\(O(1+Y_s/(w\calL))\) integers of \(J_P\).  The progression sum
and the mean in~\eqref{eq:SW-definition} are consequently bounded by
\[
  O_{\eps,s}\!\left(
    \frac{M\calL^{-2s+1}}w
    +\frac{M\calL^{-2s+2}}{Y_s}
    +\frac{M\calL^{-2s}}{\varphi(w)}\right).
\]
Use
\(1/\varphi(w)\ll_D\calL^{-D}\log\calL\), uniformly for
\(w>\calL^D\), and choose \(D>A+3\).  Each term is \(O(T)\).
This proves the claimed condition, including its uniformity under
relabeling.
\end{proof}

\subsection{Prime boxes and the exceptional set}

Partition \([X^\eps,2X]\) into intervals
\[
  I_\ell=[X^\eps\e^{\ell\Delta},
           X^\eps\e^{(\ell+1)\Delta}),
  \qquad 0\leq\ell\leq L_0,
\]
where \(L_0\) is the least nonnegative integer for which the last
upper endpoint is at least \(2X\).  Retain the last interval in full;
its upper endpoint is less than \(3X\).  There are \(O(\calL^2)\)
intervals.  A box
\(\boldsymbol I=I_{\ell_1}\times\cdots\times I_{\ell_r}\)
with \(\ell_1\leq\cdots\leq\ell_r\) is called regular if its
intervals are distinct and, when \(r\) is even, also
\begin{equation}\label{eq:even-box-separation}
  \inf I_{\ell_2}\geq
       \exp(\sqrt{\calL})\sup I_{\ell_1}.
\end{equation}
In particular, every tuple in a regular even box has
\(p_2/p_1\geq\exp(\sqrt{\calL})\).

\begin{lemma}\label{lem:regular-boxes}
The number of tuples
\[
  X^\eps\leq p_1<\cdots<p_r,\qquad p_1\cdots p_r\in[X/2,3X],
\]
is \(O_{\eps,r}(X/\calL)\).  The number lying in nonregular boxes is
\begin{equation}\label{eq:nonregular-count}
  O_{\eps,r}(X\calL^{-3/2}).
\end{equation}
The same bounds hold for weighted sums whose summands have absolute
value bounded in terms of \(\eps,r\).
\end{lemma}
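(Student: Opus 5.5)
The plan is to bound the total count by a Mertens-type sum and then to show that each way a box can fail to be regular costs an extra factor of size \(\calL^{-1/2}\) or better.

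\emph{Total count.} For the first bound, fix \(p_1<\cdots<p_{r-1}\), all at least \(X^\eps\), and count the primes \(p_r\) in \([X/(2P),3X/P]\), where \(P=p_1\cdots p_{r-1}\). Since \(p_r>p_{r-1}\), the inequality \(p_r^{r}\geq p_1\cdots p_r\geq X/2\) gives \(p_r\geq (X/2)^{1/r}\). The prime number theorem (or Brun--Titchmarsh) then bounds the count by \(\ll_{\eps,r} X/(P\calL)\). Summing over the remaining primes gives
\[
  \frac{X}{\calL}\sum_{P}\frac1P\ \leq\ \frac{X}{\calL}\Bigl(\sum_{X^\eps\leq p\leq 3X}\frac1p\Bigr)^{r-1}\ll_{\eps,r}\frac{X}{\calL},
\]
because Mertens gives \(\sum_{X^\eps\leq p\leq3X}1/p=\log(1/\eps)+O(1)\).

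\emph{Nonregular boxes.} A tuple lies in a nonregular box only if one of two things happens:
\begin{itemize}
\item two of its primes \(p_j<p_k\) fall in the same interval \(I_\ell\), so that \(p_k/p_j<\e^{\Delta}\);
\item \(r\) is even and \eqref{eq:even-box-separation} fails, which forces \(p_2/p_1<\exp(\sqrt\calL+2\Delta)\).
\end{itemize}
In both cases two of the primes satisfy \(p_j<p_k\leq p_j\exp(2\sqrt\calL)\). I take the union over the pair \((j,k)\), which costs a factor \(O_r(1)\). I then fix all primes except \(p_k\) and the largest prime \(p_r\), and handle those two separately.

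First suppose \(k\neq r\). Summing over \(p_r\) exactly as above gives \(\ll X/(P'\calL)\), where \(P'\) is the product of the primes other than \(p_r\). The sum over \(p_k\) is then
\[
  \sum_{p_j<p_k\leq p_j\exp(2\sqrt\calL)}\frac1{p_k}
  =\log\frac{\log p_j+2\sqrt\calL}{\log p_j}+O\!\left(\frac1{\log p_j}\right)
  \ll_\eps \calL^{-1/2},
\]
using Mertens with its \(O(1/\log)\) error and \(\log p_j\geq\eps\calL\). The other reciprocal sums are \(O_\eps(1)\), so this case contributes \(O(X\calL^{-3/2})\).

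Now suppose \(k=r\). The event "two primes share an interval" only needs an \(\e^{\Delta}\) window, and the even-case separation condition involves only \(p_1,p_2\); since \(r\geq5\), this pair is never \((j,r)\). So here I only need \(p_r\) confined to \((p_{r-1},\e^{\Delta}p_{r-1}]\), together with \(p_r\asymp X/P\). The number of primes in that range is \(\ll X/(P\calL)\) times a relative length. More simply, I would swap roles: sum first over \(p_{r-1}\), which is now forced near \(p_r\), and then over \(p_r\). This gives an extra factor \(\Delta\ll\calL^{-1}\), so the case is even smaller.

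\emph{Weighted sums.} The final claim follows because every bound above counts tuples with nonnegative multiplicity, so a summand bounded by \(O_{\eps,r}(1)\) only changes the constant.

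The only delicate step is the Mertens window computation. It must use the lower bound \(\log p_j\gg\eps\calL\); without pre-sieving, the window \(\exp(2\sqrt\calL)\) would not give the saving \(\calL^{-1/2}\).
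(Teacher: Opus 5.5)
Your argument is correct, and it takes a genuinely different and more elementary route than the paper.

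\textbf{The paper's route.} The paper replaces each prime sum by an integral, using the prime number theorem with the de la Vall\'ee Poussin error term. It needs this uniformly over coordinate sections that are unions of \(O_r(1)\) intervals. It then passes to the coordinates \(\alpha_i=\log p_i/\calL\), with density \(\frac{X}{\calL}\,du\,d\alpha_1\cdots d\alpha_{r-1}/(\alpha_1\cdots\alpha_r)\). Finally it observes that the nonregular tuples lie in \(O_r(1)\) slabs \(|\alpha_i-\alpha_j|\leq2\calL^{-1/2}\). By transversality of \(e_i-e_j\) to \((1,\ldots,1)\), these slabs have measure \(O_r(\calL^{-1/2})\) on each hyperplane \(\sum\alpha_i=1+\log u/\calL\).

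\textbf{Your route.} You use only a Chebyshev or Brun--Titchmarsh bound for the largest prime, made effective by \(p_r\geq(X/2)^{1/r}\), together with Mertens' theorem. The \(\calL^{-1/2}\) saving comes from the reciprocal sum over the window \((p_j,p_j\exp(2\sqrt{\calL})]\), combined with \(\log p_j\geq\eps\calL\).

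\textbf{Comparison.} Your approach needs no error term in the prime number theorem. It also sidesteps the uniformity-in-slab-width issue and the geometric measure computation. The paper's version fits the continuous-density framework used elsewhere in the paper, and would yield asymptotics rather than only upper bounds.

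\textbf{A small tightening for the case \(k=r\).} Your first description already suffices, so the vaguer ``swap roles'' step is unnecessary. Since \(p_r<\e^{\Delta}p_j<\e^{\Delta}p_{r-1}\), you can apply Brun--Titchmarsh to \((p_{r-1},\e^{\Delta}p_{r-1}]\). This interval has length \(\asymp p_{r-1}/\calL\), and \(\log p_{r-1}\gg_r\calL\). The count of \(p_r\) is therefore \(\ll_r\Delta p_{r-1}/\calL\ll\Delta X/(P\calL)\), with \(P=p_1\cdots p_{r-1}\). Summing \(1/P\) then gives \(O_{\eps,r}(X\calL^{-2})\).

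Alternatively, if \(p_j\) and \(p_r\) share an interval with \(j<r-1\), then \(p_j\) and \(p_{j+1}\) do as well. That case falls under \(k\neq r\), so only the pair \((r-1,r)\) needs separate treatment.
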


\begin{proof}
Every prime in such a tuple is at most
\(3X^{1-(r-1)\eps}<2X\), so the intervals cover all tuples under
consideration.  Put \(\alpha_i=\log p_i/\calL\).
If two prime intervals coincide, then
\(|\alpha_i-\alpha_j|\leq\Delta/\calL=\calL^{-2}\)
for some \(i\neq j\).  If~\eqref{eq:even-box-separation} fails,
then
\[
  0\leq\alpha_2-\alpha_1
       \leq\calL^{-1/2}+2\calL^{-2}
       \leq2\calL^{-1/2}.
\]
Thus the exceptional tuples lie in a union of \(O_r(1)\) slabs
\(|\alpha_i-\alpha_j|\leq2\calL^{-1/2}\), inside the domain
\[
  \alpha_i\geq\eps,\qquad
  1+\frac{\log(1/2)}{\calL}
  \leq\sum_i\alpha_i
  \leq1+\frac{\log3}{\calL}.
\]

We justify replacing the corresponding prime sums by integrals
uniformly in the slab width.  Every coordinate section of this domain
or its intersection with the union of slabs is a union of \(O_r(1)\)
intervals.  With the other original coordinates fixed, put
\(P_i=\prod_{j\neq i}x_j\).  The section is contained in
\([X^\eps,3X/P_i]\), and the prime number theorem
\cite[Equation~(2.37)]{IK} bounds its replacement error by
\[
  O_\eps\!\left(\frac X{P_i}
                         \exp(-c_\eps\sqrt{\calL})\right).
\]
The factor \(1/P_i\) separates over the other coordinates; both
\(\sum_{X^\eps\leq p\leq3X}p^{-1}\) and
\(\int_{X^\eps}^{3X}dt/(t\log t)\) are \(O_\eps(1)\).
Replacing the prime sums successively therefore incurs total error
\(O_{\eps,r,C}(X\calL^{-C})\) for every \(C>0\).

In the resulting integral, set \(x_i=X^{\alpha_i}\) and
\(u=x_1\cdots x_r/X\).  Then
\[
  \prod_{i=1}^r\frac{dx_i}{\log x_i}
  =\frac X{\calL}
    \frac{du\,d\alpha_1\cdots d\alpha_{r-1}}
         {\alpha_1\cdots\alpha_r}.
\]
Here \(u\in[1/2,3]\) and every \(\alpha_i\geq\eps\), proving the
first bound.  On each hypersurface
\(\sum_i\alpha_i=1+\log u/\calL\), the union of slabs has
\((r-1)\)-dimensional measure \(O_r(\calL^{-1/2})\): their normal
vectors \(e_i-e_j\) are transverse to \((1,\ldots,1)\), and the
domain is compact.  This proves~\eqref{eq:nonregular-count}.
\end{proof}

\begin{lemma}\label{lem:freeze-weight}
For \(\boldsymbol t\in[0,\infty)^r\), define
\[
  D_r(\boldsymbol t)=
    \sum_{\mathcal A\subseteq\{1,\ldots,r\}}(-1)^{|\mathcal A|}
       \left(1-\sum_{i\in\mathcal A}t_i\right)_+^2,
  \qquad \mathcal W_r(\boldsymbol t)=D_r(\boldsymbol t)^2.
\]
Here \(x_+=\max\{x,0\}\). Then \(0\leq\mathcal W_r\leq4^r\), and
\[
  |\mathcal W_r(\boldsymbol t)-\mathcal W_r(\boldsymbol s)|
  \leq r2^{2r+1}\norm{\boldsymbol t-\boldsymbol s}_\infty.
\]
Consequently, for distinct primes \(p_i\) in a prime box with
lower endpoints \(y_i\),
\begin{equation}\label{eq:frozen-W}
  W(p_1\cdots p_r)
  =\mathcal W_r\!\left(
       \frac{\log y_1}{\log R},\ldots,\frac{\log y_r}{\log R}
     \right)+O_r(\calL^{-2}).
\end{equation}
\end{lemma}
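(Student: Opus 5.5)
Three things need to be shown: the range bound \(0\le\mathcal W_r\le4^r\), the Lipschitz estimate, and the freezing formula \eqref{eq:frozen-W}. Everything comes from the function \(\phi(x)=(1-x)_+^2\).

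\textbf{Basic facts about \(\phi\).} On \([0,\infty)\) we have \(0\le\phi\le1\). Its derivative \(\phi'(x)=-2(1-x)_+\) is bounded by \(2\) in absolute value, so \(\phi\) is \(2\)-Lipschitz.

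\textbf{Range of \(\mathcal W_r\).} Each of the \(2^r\) terms in \(D_r\) lies in \([0,1]\), so \(|D_r|\le2^r\). Squaring gives \(0\le\mathcal W_r\le4^r\).

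\textbf{Lipschitz bound.} For each subset \(\mathcal A\),
\[
  \Bigl|\sum_{i\in\mathcal A}t_i-\sum_{i\in\mathcal A}s_i\Bigr|\le|\mathcal A|\,\norm{\boldsymbol t-\boldsymbol s}_\infty\le r\norm{\boldsymbol t-\boldsymbol s}_\infty .
\]
Since \(\phi\) is \(2\)-Lipschitz, summing over the \(2^r\) subsets gives
\[
  |D_r(\boldsymbol t)-D_r(\boldsymbol s)|\le 2r\,2^r\norm{\boldsymbol t-\boldsymbol s}_\infty .
\]
Now use \(|a^2-b^2|=|a-b|\,|a+b|\le|a-b|\cdot2^{r+1}\). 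This yields
\[
  |\mathcal W_r(\boldsymbol t)-\mathcal W_r(\boldsymbol s)|\le r\,2^{2r+2}\norm{\boldsymbol t-\boldsymbol s}_\infty .
\]
That is a factor \(2\) weaker than the stated \(r2^{2r+1}\). To get the stated constant, sharpen the count to \(\sum_{\mathcal A}|\mathcal A|=r2^{r-1}\) instead of \(r2^r\). Then
\[
  |D_r(\boldsymbol t)-D_r(\boldsymbol s)|\le 2\cdot r2^{r-1}\norm{\boldsymbol t-\boldsymbol s}_\infty=r2^r\norm{\boldsymbol t-\boldsymbol s}_\infty,
\]
and multiplying by \(2^{r+1}\) gives exactly \(r2^{2r+1}\).

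\textbf{Expressing \(W(n)\) through \(D_r\).} Take \(n=p_1\cdots p_r\) square-free. Its divisors are \(d_{\mathcal A}=\prod_{i\in\mathcal A}p_i\), with \(\mu(d_{\mathcal A})=(-1)^{|\mathcal A|}\). Write \(t_i=\log p_i/\log R\). Then
\[
  \frac{\log(R/d_{\mathcal A})}{\log R}=1-\sum_{i\in\mathcal A}t_i .
\]
The cutoff \(\one_{d\le R}\) combined with the zero extension of \(F(x)=x^2\) means \(F(x_d)\one_{d\le R}=\phi\bigl(\sum_{i\in\mathcal A}t_i\bigr)\). The remaining case \(x_d>1\) cannot occur, since \(d\ge1\). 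Hence \(W(n)=\mathcal W_r(\boldsymbol t)\) exactly.

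\textbf{Freezing on a box.} If \(p_i\) lies in a box interval \([y_i,\e^\Delta y_i)\), then
\[
  0\le\log p_i-\log y_i<\Delta=\calL^{-1}.
\]
Since \(\log R\asymp\calL\) by \eqref{eq:R}, this gives
\[
  \Bigl|t_i-\frac{\log y_i}{\log R}\Bigr|\ll\calL^{-2}.
\]
The Lipschitz bound then gives \eqref{eq:frozen-W}, with the constant depending only on \(r\).

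There is no real obstacle here. The only points needing care are that the zero extension makes \(F(x_d)\one_{d\le R}=\phi(\cdot)\) hold exactly, including the boundary case \(d=R\), and the counting \(\sum_{\mathcal A}|\mathcal A|=r2^{r-1}\) that gives the stated constant.
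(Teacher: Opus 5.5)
Your proposal is correct and follows the paper's proof essentially step for step. Both arguments use \(0\le(1-u)_+^2\le1\) together with its \(2\)-Lipschitz property on \([0,\infty)\), and the count \(\sum_{\mathcal A}|\mathcal A|=r2^{r-1}\) giving \(|D_r(\boldsymbol t)-D_r(\boldsymbol s)|\le r2^r\norm{\boldsymbol t-\boldsymbol s}_\infty\), followed by \(|a+b|\le2^{r+1}\). Both then use the exact identity \(W(p_1\cdots p_r)=\mathcal W_r(\log p_1/\log R,\ldots,\log p_r/\log R)\) coming from the zero extension, and freeze via \(0\le\log(p_i/y_i)<\Delta\) with \(\log R\asymp\calL\).
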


\begin{proof}
The function \(u\mapsto(1-u)_+^2\) is bounded by \(1\) and is
\(2\)-Lipschitz on \([0,\infty)\).  Thus \(|D_r|\leq2^r\) and
\[
  |D_r(\boldsymbol t)-D_r(\boldsymbol s)|
   \leq 2\sum_{\mathcal A}|\mathcal A|
                       \norm{\boldsymbol t-\boldsymbol s}_\infty
   =r2^r\norm{\boldsymbol t-\boldsymbol s}_\infty.
\]
Squaring gives the asserted Lipschitz bound.  The divisor expansion
of the sieve weight gives
\(W(p_1\cdots p_r)=\mathcal W_r(\log p_1/\log R,\ldots,
\log p_r/\log R)\), including when a subset-product crosses \(R\).
Finally, \(0\leq\log(p_i/y_i)\leq\Delta\) and
\(\log R\asymp\calL\), proving~\eqref{eq:frozen-W}.
\end{proof}

\subsection{Second moments on a regular box}

\begin{lemma}\label{lem:box-discrepancy}
Let \(\boldsymbol I\) be a regular box with lower endpoints \(y_i\)
and \(y_1\cdots y_r\in[X,2X]\).  Put \(n=p_1\cdots p_r\), and
let \(q\) denote either alternating factor \(d\) or \(e\) in
\eqref{eq:alternating-factors}.  For every \(A>0\),
\begin{equation}\label{eq:box-discrepancy-weighted}
  \sum_{\substack{p_i\in I_{\ell_i}\\1\leq i\leq r}}
     \bigl(\Xi_q(n/q)-\mathfrak m(q)\bigr)
  \ll_{A,\eps,r}X\calL^{-A},
\end{equation}
uniformly in the box and the choice of \(q\).
\end{lemma}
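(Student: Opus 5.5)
The plan is to reduce \eqref{eq:box-discrepancy-weighted} to Xi's generalized Barban--Davenport--Halberstam mean-value theorem \cite[Lemma~4.1]{XiBDH}. The input is a bilinear structure in which the modulus \(q\) runs over products of primes from the intervals assigned to \(q\). The argument \(n/q\) runs over the complementary prime product, and it enters only through its residue class modulo \(q\).

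Fix the choice \(q\in\{d,e\}\). Let \(\mathcal J\subseteq\{1,\ldots,r\}\) be the index set of its primes, and \(\mathcal J^c\) the complement, which always contains \(r\). Define
\[
  a_m=\sum_{\substack{m=\prod_{i\in\mathcal J^c}p_i\\ p_i\in I_{\ell_i}}}1,
  \qquad
  b_q=\sum_{\substack{q=\prod_{i\in\mathcal J}p_i\\ p_i\in I_{\ell_i}}}1 .
\]
Because the intervals of a regular box are distinct, hence disjoint, representations are unique. The left side of \eqref{eq:box-discrepancy-weighted} then equals
\[
  \sum_q b_q\sum_m a_m\bigl(\Xi_q(m)-\mathfrak m(q)\bigr).
\]
The primes in \(m\) and \(q\) are distinct, so \((m,q)=1\) automatically. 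Write \(M=\prod_{i\in\mathcal J^c}y_i\) and \(Q=\prod_{i\in\mathcal J}y_i\). Lemma~\ref{lem:prime-SW} shows that \(\boldsymbol a\) satisfies the Siegel--Walfisz condition \eqref{eq:SW-definition}, with \(\norm{\boldsymbol a}_2\asymp M^{1/2}\calL^{-|\mathcal J^c|}\). Lemma~\ref{lem:square-trace} shows that \((\Xi_q)\) is \((1,C)\)-admissible, with \(\norm{\Xi_q}_\infty\le4^{\om(q)}\ll_{\eps,r}1\) since \(\om(q)\le r\). Applying Cauchy--Schwarz over \(q\) with \(|b_q|\le1\), it suffices to bound the second moment
\[
  \sum_{q\sim Q}\Bigl|\sum_m a_m\bigl(\Xi_q(m)-\mathfrak m(q)\bigr)\Bigr|^2 .
\]
Xi's theorem bounds this by \(\ll \norm{\boldsymbol a}_2^2\,Q\,\calL^{-A'}\) (or by \(\ll\norm{\boldsymbol a}_2^2(Q+M)\) together with a Siegel--Walfisz saving), provided \(Q\le M\calL^{-B'}\). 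Combining this with \(\norm{\boldsymbol b}_2^2\ll Q\) gives a total of \(\ll (QM)\calL^{-A'/2}\ll X\calL^{-A}\).

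The main obstacle is verifying the range condition \(Q\le M\calL^{-B'}\), that is \(M/Q\ge\calL^{B'}\), uniformly in the box. This is exactly where Lemma~\ref{lem:pairing} and regularity enter; the ratios can be read off from the lower endpoints up to factors \(e^{O(\Delta)}\).
\begin{itemize}
\item For odd \(r\), Lemma~\ref{lem:pairing} gives \(M/Q=ef/d\ge p_r\ge X^\eps\) when \(q=d\), and \(M/Q=df/e\ge p_1\ge X^\eps\) when \(q=e\).
\item For even \(r\) with \(q=e\), it gives \(df/e\ge p_1p_r\ge X^{2\eps}\).
\item For even \(r\) with \(q=d\), it gives only \(ef/d\ge p_2/p_1\). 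Here \eqref{eq:even-box-separation} supplies \(p_2/p_1\ge\exp(\sqrt\calL)\), which beats any power of \(\calL\). This is why the nonregular boxes were removed.
\end{itemize}

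Three further checks are needed. First, the modulus \(q=1\) does not occur, since \(r\ge5\) makes both \(d\) and \(e\) nontrivial; if it did occur, the summand would vanish by Lemma~\ref{lem:square-trace}. Second, the divisor-function losses \(\tau(q)^C\) from admissibility are bounded, because \(\om(q)\le r\). Third, every constant in Lemma~\ref{lem:prime-SW} and in Xi's theorem is uniform in the intervals, which gives the uniformity over boxes and over the choice of \(q\). If Xi's lemma requires a smooth or dyadic support for \(q\) rather than the product support of \(b_q\), the fix is to majorize \(|b_q|\le\one_{q\in[Q,2Q]}\) after Cauchy--Schwarz. This costs nothing, because the second moment is a sum of nonnegative terms.
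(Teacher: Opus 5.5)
Your reduction and your size check agree with the paper. Lemma~\ref{lem:pairing} and \eqref{eq:even-box-separation} make the complementary product exceed the modulus by a factor at least \(\exp(\sqrt{\calL})\) in every case; the paper records this as \(Q\le UV\exp(-\tfrac12\sqrt{\calL})\). Your remarks on \(q=1\), on \(\tau(q)\) and on uniformity are also fine.

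\textbf{The gap is the analytic input.} In the form the paper uses it, \cite[Lemma~4.1]{XiBDH} is an \(L^1\) bound over moduli for a bilinear form, not a second-moment bound. Take \(\boldsymbol B\) satisfying \eqref{eq:SW-definition} on \([U,2U]\), \(\boldsymbol C\) on \([V,2V]\), and \(Q\le UV(\log UV)^{-B_0}\). Let \(\mathcal E_q\) be the discrepancy \(\sum_{u,v}B_uC_v(\Xi_q(uv)-\mathfrak m(q))\). The lemma bounds \(\sum_{q\le Q}\tau(q)^2|\mathcal E_q|\) by \(\norm{\boldsymbol B}_2\norm{\boldsymbol C}_2Q(UV)^{1/2}\calL^{-A}\).

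The bound you attribute to it, \(\sum_{q\sim Q}|\sum_m a_m(\Xi_q(m)-\mathfrak m(q))|^2\ll\norm{\boldsymbol a}_2^2Q\calL^{-A'}\), is smaller than the size \(Q\norm{\boldsymbol a}_2^2\) predicted by square-root cancellation, which is what the diagonal \(m=m'\) contributes. It should not be expected, and no Barban--Davenport--Halberstam-type theorem provides it. It also does not give your total: Cauchy--Schwarz turns it into \(Q\norm{\boldsymbol a}_2\calL^{-A'/2}\asymp QM^{1/2}\calL^{-|\mathcal J^c|-A'/2}\), not \(QM\calL^{-A'/2}\). What your last line actually needs is \(\ll QM\norm{\boldsymbol a}_2^2\calL^{-A'}\), a logarithmic saving over the trivial bound, and your parenthetical alternative is too vague to supply it. As written, the only nontrivial estimate in the argument is unsupported.

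There are two ways to repair this.

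\textbf{The paper's route.} Split the \(|\mathcal J^c|\ge2\) complementary indices into two nonempty blocks with products \(U\ge V\). Put the Siegel--Walfisz sequence on the larger block; Lemma~\ref{lem:prime-SW} is uniform under relabelling. Take \(\gamma_{v,q}=1\) and \(Q=2\prod_{i\in\mathcal J}y_i\), so the moduli lie in \([Q/2,Q]\). Then apply the \(L^1\) bound directly. Since the modulus coefficients are \(0\) or \(1\), no Cauchy--Schwarz is needed, and \eqref{eq:prime-product-norm} gives \(\norm{\boldsymbol B}_2\norm{\boldsymbol C}_2Q(UV)^{1/2}\ll X\calL^{-|\mathcal J^c|}\).

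\textbf{Keeping your single sequence and Cauchy--Schwarz.} Take the second moment from the classical Barban--Davenport--Halberstam bound for sequences satisfying \eqref{eq:SW-definition}. Since \(\sum_{b\bmod q}^{*}(\Xi_q(b)-\mathfrak m(q))=0\), the inner sum equals \(\sum_{b\bmod q}^{*}(\Xi_q(b)-\mathfrak m(q))\mathcal D_q(b)\), where \(\mathcal D_q(b)\) is the discrepancy of \(\boldsymbol a\) in the class \(b\bmod q\). Boundedness of \(\Xi_q\) and Cauchy--Schwarz over \(b\) bound its square by \(O_r\bigl(q\sum_{b\bmod q}^{*}|\mathcal D_q(b)|^2\bigr)\). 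Summing over \(q\le 2Q\) gives \(\ll Q(Q\calL+M\calL^{-A})\norm{\boldsymbol a}_2^2\), which suffices because \(Q\le M\exp(-\tfrac12\sqrt{\calL})\). This route uses only \(\norm{\Xi_q}_\infty\le4^r\) and the vanishing mean, not the character-sum bounds of Lemma~\ref{lem:square-trace}.
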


\begin{proof}
Let \(\mathcal P\) be the indices occurring in \(q\), and put
\(k=|\mathcal P|\).  Partition its complement into two nonempty
blocks \(\mathcal J_1,\mathcal J_2\); this is possible since
\(r-k\geq2\).  Write
\[
  Q=2\prod_{i\in\mathcal P}y_i,\qquad
  U=\prod_{i\in\mathcal J_1}y_i,\qquad
  V=\prod_{i\in\mathcal J_2}y_i,
\]
and interchange the last two blocks so that \(U\geq V\).
Let \(A_q,B_u,C_v\) count the prime products in the respective
blocks.  By disjointness of the prime intervals, each coefficient is
\(0\) or \(1\), and
\[
  \operatorname{supp}(\boldsymbol A)\subseteq[Q/2,Q],\quad
  \operatorname{supp}(\boldsymbol B)\subseteq[U,2U],\quad
  \operatorname{supp}(\boldsymbol C)\subseteq[V,2V]
\]
for sufficiently large \(X\).  The support of \(\boldsymbol A\)
consists of square-free integers. Whenever \(A_qB_uC_v\neq0\), the
ordering and coprimality conditions are automatic. Hence the sum in
\eqref{eq:box-discrepancy-weighted} equals \(\sum_q A_q\mathcal E_q\),
where, for square-free \(q\),
\[
  \mathcal E_q=
  \sum_{(uv,q)=1}B_uC_v\Xi_q(uv)
   -\mathfrak m(q)\sum_{(uv,q)=1}B_uC_v.
\]

If the box contains no prime tuple there is nothing to prove.
Otherwise, Lemma~\ref{lem:pairing} and regularity give, for either
choice of \(q\),
\[
  \frac{n/q}{q}\geq
  \begin{cases}
    X^\eps,&r\text{ odd},\\
    \min\{X^{2\eps},\exp(\sqrt{\calL})\},&r\text{ even}.
  \end{cases}
\]
The preceding lower bound is at least \(\exp(\sqrt{\calL})\)
for sufficiently large \(X\). The products within each block vary
by at most \(\e^{r\Delta}\), so
\[
  \frac{Q}{UV}
  \leq 2\e^{r\Delta}\frac{q}{n/q}
  \leq 2\exp(r\Delta-\sqrt{\calL}).
\]
Consequently, for all sufficiently large \(X\),
\begin{equation}\label{eq:box-BDH-range}
  Q\leq UV\exp(-\tfrac12\sqrt{\calL})
       \leq UV(\log UV)^{-B_0}
\end{equation}
for any prescribed fixed \(B_0>0\).  Also
\(QUV\asymp X\) and \(\log(UV)\asymp_{\eps,r}\calL\).

Apply \cite[Lemma~4.1]{XiBDH} with
\[
  M=U,\quad N=V,\quad
  \boldsymbol\alpha=\boldsymbol B,\quad
  \boldsymbol\beta=\boldsymbol C,\quad
  \gamma_{v,q}=\one_{\{v\in\operatorname{supp}(\boldsymbol C)\}}.
\]
Lemma~\ref{lem:prime-SW} gives the Siegel--Walfisz condition for
\(\boldsymbol B\), irrespective of the interchange of the blocks;
\(\norm{\boldsymbol\gamma_q}_\infty\leq1\).
Lemma~\ref{lem:square-trace} verifies the admissibility condition for
\(q\geq2\); the term \(q=1\) has \(\mathcal E_1=0\).
Take the auxiliary exponent of \(\tau(q)\) in Xi's lemma to be \(2\).
For every \(A>0\), that lemma and~\eqref{eq:box-BDH-range} give
\begin{equation}\label{eq:box-discrepancy}
  \sum_{\substack{2\leq q\leq Q\\\mu^2(q)=1}}
    \tau(q)^2|\mathcal E_q|
  \ll_{A,\eps,r}
    \norm{\boldsymbol B}_2\norm{\boldsymbol C}_2
          Q(UV)^{1/2}\calL^{-A}
  \ll_{A,\eps,r}X\calL^{-A}.
\end{equation}
Indeed, \eqref{eq:prime-product-norm} bounds the product of norms by
\((UV)^{1/2}\calL^{-(r-k)}\), and \(QUV\asymp X\).
Since \(A_q\leq1\) and its support is square-free,
\eqref{eq:box-discrepancy} proves the assertion.
\end{proof}

\subsection{Second-moment transference}

For each ordered prime tuple write \(n=p_1\cdots p_r\), and define
\(d,e\) by~\eqref{eq:alternating-factors}.  Put
\begin{align*}
  \mathcal S_r(X)&=
    \sum_{X^\eps\leq p_1<\cdots<p_r}g(n/X)W(n),\\
  \mathcal T_{r,d}(X)&=
    \sum_{X^\eps\leq p_1<\cdots<p_r}g(n/X)W(n)\Xi_d(n/d),\\
  \mathcal T_{r,e}(X)&=
    \sum_{X^\eps\leq p_1<\cdots<p_r}g(n/X)W(n)\Xi_e(n/e).
\end{align*}
These sums include the tuples in nonregular boxes.

\begin{proposition}\label{prop:box-transfer}
For every fixed \(\eps>0\) and integer \(r\geq5\),
\begin{equation}\label{eq:box-transfer}
  \mathcal T_{r,d}(X)=\mathcal S_r(X)+o_{\eps,r}(X/\calL),
  \qquad
  \mathcal T_{r,e}(X)=\mathcal S_r(X)+o_{\eps,r}(X/\calL).
\end{equation}
\end{proposition}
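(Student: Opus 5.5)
We decompose both sums over the geometric prime boxes \(\boldsymbol I=I_{\ell_1}\times\cdots\times I_{\ell_r}\) from the partition above. We treat \(\mathcal T_{r,d}\) in detail; \(\mathcal T_{r,e}\) is identical with \(q=e\).

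First we discard nonregular boxes. By the Weil bound, \(\Xi_d(n/d)\leq4^{\om(d)}\leq4^r\). By Lemma~\ref{lem:freeze-weight}, \(W(n)\leq4^r\), and \(g\) is bounded. Hence the summands of both \(\mathcal S_r\) and \(\mathcal T_{r,d}\) are bounded in terms of \(r\) and \(g\). Since \(g(n/X)\neq0\) forces \(n\in[X,2X]\subseteq[X/2,3X]\), Lemma~\ref{lem:regular-boxes} shows that tuples in nonregular boxes contribute \(O_{\eps,r}(X\calL^{-3/2})=o(X/\calL)\) to each sum. The same lemma also shows that boxes meeting the support of \(g(n/X)\) but having \(y_1\cdots y_r\notin[X,2X]\) occur only at the boundary of the support. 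Because \(g\) vanishes at \(1\) and \(2\), such boxes carry total weight \(O(\Delta)\cdot O(X/\calL)\) after the freezing step below. Alternatively, one can enlarge the range to \([X/2,3X]\) in Lemma~\ref{lem:box-discrepancy}, whose proof only uses \(QUV\asymp X\).

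On each regular box we freeze the weights. By~\eqref{eq:frozen-W}, \(W(n)=\mathcal W_r(\boldsymbol y)+O_r(\calL^{-2})\), where \(\mathcal W_r(\boldsymbol y)\) denotes the value at the logarithms of the lower endpoints. Similarly, \(g(n/X)=g(y_1\cdots y_r/X)+O_g(\Delta)\), because \(n/(y_1\cdots y_r)\in[1,\e^{r\Delta})\). Write \(c_{\boldsymbol I}=g(y_1\cdots y_r/X)\mathcal W_r(\boldsymbol y)\), which is bounded. Replacing \(g(n/X)W(n)\) by \(c_{\boldsymbol I}\) costs \(O(\calL^{-1})\) per tuple. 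By the first count in Lemma~\ref{lem:regular-boxes}, this gives a total error of \(O(X\calL^{-2})\) in each of \(\mathcal S_r\) and \(\mathcal T_{r,d}\).

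Next we replace \(\Xi_d(n/d)\) by its mean. On a regular box the difference becomes
\[
c_{\boldsymbol I}\sum_{p_i\in I_{\ell_i}}\bigl(\Xi_d(n/d)-\mathfrak m(d)\bigr)\ll_{A,\eps,r}X\calL^{-A}
\]
by Lemma~\ref{lem:box-discrepancy}. There are only \(O(\calL^{2r})\) boxes, so taking \(A=2r+2\) makes the total \(O(X\calL^{-2})\). Finally, every prime factor of \(d\) is at least \(X^\eps\) and \(d\leq3X\), so Lemma~\ref{lem:square-trace} gives \(\mathfrak m(d)=1+O_\eps(X^{-\eps/2})\). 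Replacing \(\mathfrak m(d)\) by \(1\) therefore costs \(O(X^{1-\eps/2})\).

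Undoing the freezing in the same way, and restoring the nonregular tuples, yields \(\mathcal T_{r,d}=\mathcal S_r+o(X/\calL)\).

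The only delicate point is ensuring that Lemma~\ref{lem:box-discrepancy} applies to every box carrying weight. This means handling boxes whose lower-endpoint product lies just outside \([X,2X]\) while the support of \(g\) is still met. We will handle these by the support range \([X/2,3X]\) remark above: the proof of that lemma is insensitive to constant factors in \(QUV\asymp X\). Everything else is bookkeeping with the uniform box counts.
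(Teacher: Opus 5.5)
Your proposal is correct and follows the paper's proof. You discard nonregular boxes by Lemma~\ref{lem:regular-boxes}, freeze \(g\) and \(W\) at the lower endpoints at total cost \(O(X\calL^{-2})\), apply Lemma~\ref{lem:box-discrepancy} on each of the \(O(\calL^{2r})\) boxes, replace \(\mathfrak m(q)\) by \(1\) via Lemma~\ref{lem:square-trace}, and unfreeze. The ``delicate point'' you flag does not actually arise: since \(g\) is supported in \([1,2]\), any box whose frozen weight \(g(y_1\cdots y_r/X)\) is nonzero already satisfies \(y_1\cdots y_r\in[X,2X]\), and this is how the paper meets the hypothesis of Lemma~\ref{lem:box-discrepancy}. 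Your extension of that lemma to \(y_1\cdots y_r\asymp X\) is also valid, but unnecessary.
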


\begin{proof}
For \(\eps\geq1\), all three sums vanish for sufficiently large
\(X\). We therefore assume \(0<\eps<1\).
Both \(W(n)\leq4^r\) and \(\Xi_q(n/q)\leq4^r\) are bounded
in terms of \(r\).  By Lemma~\ref{lem:regular-boxes}, removing
nonregular boxes from any of the three sums costs
\(O_{\eps,r,g}(X\calL^{-3/2})\).

On a remaining box with lower endpoints \(y_i\), smoothness gives
\[
  g(n/X)=g(y_1\cdots y_r/X)+O_{g,r}(\Delta).
\]
Whenever either smooth weight is nonzero, \(n\in[X/2,3X]\) for
large \(X\).  Lemmas~\ref{lem:regular-boxes} and
\ref{lem:freeze-weight} therefore allow both \(g\) and \(W\) to be
frozen on all regular boxes with total error
\(O_{\eps,r,g}(X\calL^{-2})\) in each sum.
Boxes on which the frozen smooth weight vanishes can be omitted.
Every remaining box satisfies \(y_1\cdots y_r\in[X,2X]\).

There are \(O_r(\calL^{2r})\) boxes, and their frozen weights are
bounded independently of \(X\).  Applying
Lemma~\ref{lem:box-discrepancy} on each box with \(A>2r+2\)
replaces \(\Xi_q(n/q)\) by \(\mathfrak m(q)\), for either
\(q=d\) or \(q=e\), with total error \(o_{\eps,r}(X/\calL)\).
All prime factors of these moduli are at least \(X^\eps\), and
\(q\leq n\leq3X\) on the retained boxes.  By
\eqref{eq:Xi_average} and the tuple count in
Lemma~\ref{lem:regular-boxes}, replacing \(\mathfrak m(q)\) by \(1\)
costs \(O_{\eps,r,g}(X^{1-\eps/2}/\calL)\).
The resulting expression is the frozen regular-box contribution to
\(\mathcal S_r(X)\).  Restoring its weights and nonregular boxes
proves~\eqref{eq:box-transfer}.
\end{proof}
\section{The complementary contribution and sieve constants}\label{sec:numerics}

\subsection{Reduction to sieve mass}

\begin{proposition}\label{prop:pair-transfer}
For every sufficiently small fixed \(\eps>0\),
\begin{equation}\label{eq:pair-transfer}
  H_{\geq5}(X)
  \leq
  2\sum_{\om(n)\geq5}
    g\!\left(\frac nX\right)\mu^2(n)W(n)
  +o_\eps\!\left(\frac{X}{\log X}\right).
\end{equation}
\end{proposition}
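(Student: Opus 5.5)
We decompose \(H_{\geq5}(X)\) according to the exact number \(r=\om(n)\) of prime factors. The pre-sieving condition \((n,P_\eps)=1\) together with \(n\leq 2X\) forces \(r\leq 1/\eps\). Hence only \(O_\eps(1)\) layers \(5\leq r\leq R_\eps:=\lfloor1/\eps\rfloor\) occur, and any error that is \(o_{\eps,r}(X/\calL)\) for each fixed \(r\) sums to \(o_\eps(X/\calL)\). For square-free \(n\) with all prime factors at least \(X^\eps\), write \(n=p_1\cdots p_r\) with \(X^\eps\leq p_1<\cdots<p_r\). This ordered representation is unique, so the layer-\(r\) part of \(H_{\geq5}\) is
\[
  \sum_{X^\eps\leq p_1<\cdots<p_r} g(n/X)\,W(n)\,|\Kl(1;n)|.
\]

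On each summand we apply the pointwise bound \eqref{eq:pair-pointwise} of Lemma~\ref{lem:pairing}, with \(d,e,f\) the alternating factors of \eqref{eq:alternating-factors}. Since \(g\geq0\) and \(W\geq0\), this gives
\[
  \text{(layer \(r\))}\leq \mathcal T_{r,d}(X)+\mathcal T_{r,e}(X).
\]
Here we use \(\Xi_d(ef)=\Xi_d(n/d)\) and \(\Xi_e(df)=\Xi_e(n/e)\). Proposition~\ref{prop:box-transfer} then gives
\[
  \mathcal T_{r,d}(X)+\mathcal T_{r,e}(X)=2\mathcal S_r(X)+o_{\eps,r}(X/\calL).
\]
Summing over \(5\leq r\leq R_\eps\), we obtain
\[
  H_{\geq5}(X)\leq 2\sum_{5\leq r\leq R_\eps}\mathcal S_r(X)+o_\eps(X/\calL),
\]
and \(\sum_r\mathcal S_r(X)\) is exactly
\[
  \sum_{(n,P_\eps)=1,\ \om(n)\geq5} g(n/X)\mu^2(n)W(n).
\]

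It remains to drop the pre-sieving condition. Every term of the unrestricted sum in \eqref{eq:pair-transfer} is nonnegative, so the restricted sum is at most the unrestricted one. This direction is the favourable one, and the passage is therefore immediate, with no appeal to Lemma~\ref{lem:small-prime} needed here.

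There is one small point to check. When \(\om(n)\geq5\), the bound \(d\geq2\) (or \(e\geq2\)) holds, so the conventions for \(q=1\) never enter. Also \(r\geq5\) guarantees \(r-k\geq2\) in Lemma~\ref{lem:box-discrepancy}, which that lemma's hypotheses require; this is already handled inside Proposition~\ref{prop:box-transfer}.

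The only substantive input is Proposition~\ref{prop:box-transfer}, which is already established. The step needing care is uniformity: the \(o_{\eps,r}\) errors must be summed over a range of \(r\) that is finite but depends on \(\eps\). This is harmless because \(\eps\) is fixed before \(X\to\infty\), which is exactly what the subscript in \(o_\eps\) records.
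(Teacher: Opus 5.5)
Your proposal is correct and follows the paper's proof essentially step for step. You split by \(r=\om(n)\), which gives finitely many layers for fixed \(\eps\), and bound each layer by \(\mathcal T_{r,d}(X)+\mathcal T_{r,e}(X)\) using Lemma~\ref{lem:pairing}; you then apply Proposition~\ref{prop:box-transfer}, sum over \(r\), and drop the roughness condition on the right by nonnegativity.
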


\begin{proof}
For a modulus counted by \(H_{\geq5}(X)\), roughness and \(n\leq2X\)
give \(\om(n)\leq\eps^{-1}+1\) for all sufficiently large \(X\).
Let \(H_r(X)\) be the contribution from \(\om(n)=r\).
Lemma~\ref{lem:pairing} and Proposition~\ref{prop:box-transfer} give
\[
  H_r(X)
  \leq\mathcal T_{r,d}(X)+\mathcal T_{r,e}(X)
  =2\mathcal S_r(X)+o_{\eps,r}\!\left(\frac{X}{\log X}\right).
\]
There are only finitely many relevant \(r\) for fixed \(\eps\).
Summing over them and then removing the roughness condition from the
nonnegative sieve mass on the right proves~\eqref{eq:pair-transfer}.
\end{proof}

\subsection{The total sieve mass}

Only an upper bound for the total square-free sieve mass is needed.
We obtain it from the ordinary second moment over all integers; its
range includes the precise value of \(R\) in~\eqref{eq:R}.

\begin{lemma}\label{lem:total-sieve-mass}
For \(F(x)=x^2\) and \(R\) as in~\eqref{eq:R},
\begin{equation}\label{eq:total-sieve-mass}
  \sum_n g\!\left(\frac nX\right)\mu^2(n)W(n)
  \leq\left(\frac{16}{3}+o(1)\right)\frac{X}{\log X}.
\end{equation}
\end{lemma}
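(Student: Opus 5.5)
We drop the square-free condition and bound the full sum instead. Since \(W(n)\geq0\) and \(0\leq\mu^2\leq1\), the left side of \eqref{eq:total-sieve-mass} is at most
\[
  \Sigma(X):=\sum_n g\!\left(\frac nX\right)W(n).
\]
Expanding the square as in the proof of Proposition~\ref{prop:signed} and summing over \(n\) gives
\[
  \Sigma(X)=\sum_{d_1,d_2\leq R}\lambda_{d_1}\lambda_{d_2}
  \sum_{[d_1,d_2]\mid n}g\!\left(\frac nX\right)
  =X\sum_{d_1,d_2}\frac{\lambda_{d_1}\lambda_{d_2}}{[d_1,d_2]}
   +O\Bigl(\sum_{d_1,d_2\leq R}|\lambda_{d_1}\lambda_{d_2}|\Bigr).
\]
Here we use \(\widetilde g(1)=\int g=1\) and Poisson summation (or Euler--Maclaurin) for the smooth \(g\); the error per pair is \(O_g(1)\). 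Since \(|\lambda_d|\leq1\), the total error is \(O(R^2)=O(X^{1/2})\), which is negligible.

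The main step is the classical asymptotic for the Selberg quadratic form with a smooth profile vanishing at \(0\):
\[
  \sum_{d_1,d_2}\frac{\mu(d_1)\mu(d_2)}{[d_1,d_2]}
   F\!\left(\tfrac{\log(R/d_1)}{\log R}\right)
   F\!\left(\tfrac{\log(R/d_2)}{\log R}\right)
  =\frac{1+o(1)}{\log R}\int_0^1 F'(x)^2\,dx .
\]
I would prove it in the Goldston--Pintz--Y\i ld\i r\i m manner.

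First, since \(F(x)=x^2\) is a polynomial, write \(F(\log(R/d)/\log R)\) as a Mellin--Perron integral
\[
  \frac{1}{2\pi i}\int\Bigl(\frac{R}{d}\Bigr)^{s}\frac{2}{s^3(\log R)^2}\,ds .
\]
Interchanging sums, the \(d\)-sum produces the Euler product
\[
  \frac{\zeta(1+s_1+s_2)}{\zeta(1+s_1)\zeta(1+s_2)}\,G(s_1,s_2),
\]
where \(G\) is holomorphic and equal to \(1\) at the origin.

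Next, shift the contours to the left of \(0\) in a standard zero-free region and compute the residue at \(s_1=s_2=0\). This gives
\[
  \frac{1}{\log R}\int_0^1F'(x)^2\,dx+O\bigl((\log R)^{-2}\bigr).
\]

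Alternatively, the same result can be obtained by the elementary route: diagonalize with \(y_r=\mu(r)\varphi(r)\sum_{r\mid d}\lambda_d/d\), so that the quadratic form equals \(\sum_r y_r^2/\varphi(r)\). Then evaluate \(y_r\approx F'(\log(R/r)/\log R)/\log R\) using \(\sum_{m\leq x,(m,r)=1}\mu(m)/m\cdot(\log(x/m))^j\) asymptotics. Finally sum using \(\sum_{r\leq x}\mu^2(r)/\varphi(r)=\log x+O(1)\).

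For \(F(x)=x^2\) one has \(\int_0^1F'^2=\int_0^14x^2\,dx=4/3\), and \(\log R=\tfrac14\calL\,(1+o(1))\) by \eqref{eq:R}. Hence
\[
  \Sigma(X)=\frac{4}{3}\cdot\frac{4X}{\calL}\,(1+o(1))+O(X^{1/2})
  =\Bigl(\frac{16}{3}+o(1)\Bigr)\frac{X}{\log X},
\]
which gives \eqref{eq:total-sieve-mass}.

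The main obstacle is the rigorous evaluation of the quadratic form with uniform control of the contour-shift error. The profile has only \(F(0)=0\), not \(F'(0)=0\), so the pole structure must be checked carefully: the integrand behaves like \(s_1^{-3}s_2^{-3}\cdot s_1s_2/(s_1+s_2)\). I would handle it exactly as in Part~I's framework with the factor \(2^{\om(m)}\) replaced by \(1\), which is the one-dimensional case. I expect the error term there to be \(O(X/\calL^2)\).
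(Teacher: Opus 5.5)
Your proposal is correct and is essentially the paper's argument: drop \(\mu^2\) by positivity and evaluate the full one-dimensional second moment, whose main term \(4X/(3\log R)\), combined with \(\log R\sim\calL/4\), gives \(16/3\). The only difference is that the paper quotes this asymptotic from the singleton case of GPY's Proposition~4 (sharp cutoff, then partial summation), whereas you re-derive it through the smooth \(n\)-sum and the Selberg quadratic form. One small slip: \(F(x)=x^2\) does satisfy \(F'(0)=0\), and this double zero is what produces the \(s^{-3}\) kernel and the pole structure you wrote down.
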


\begin{proof}
Put
\[
  \Lambda_R(n)=\frac12\sum_{\substack{d\mid n\\d\leq R}}
    \mu(d)\log^2\frac Rd,
  \qquad W(n)=\frac{4\Lambda_R(n)^2}{(\log R)^4}.
\]
The singleton case of \cite[Proposition~4]{GPY}, translated to
\(\mathcal H_1=\mathcal H_2=\{0\}\) and \(\ell_1=\ell_2=1\), gives
\[
  \sum_{n\leq u}\Lambda_R(n)^2
  =\frac{u}{3}(\log R)^3
   +O\!\left(u(\log R)^2+R^2(\log R)^{10}\right)
\]
uniformly for \(X\leq u\leq2X\).  Here the singular series is \(1\);
the lower powers of \(\log R\) and the exponentially decreasing
error in that proposition have been included in \(O(u(\log R)^2)\).
Partial summation and \(\widetilde g(1)=1\) therefore yield
\[
  \sum_n g\!\left(\frac nX\right)W(n)
  =\frac{4X}{3\log R}
   +O_g\!\left(\frac{X}{(\log R)^2}
                  +R^2(\log R)^6\right).
\]
Since \(R=X^{1/4}(\log X)^{-B}\), the error is
\(o(X/\log X)\) and \(\log X/\log R=4+o(1)\).
Finally, \(0\leq\mu^2(n)W(n)\leq W(n)\), proving the lemma.
\end{proof}

\subsection{Fixed prime-factor layers}\label{subsec:fixed-layers}

For \(2\leq i\leq4\), define
\begin{equation}\label{eq:defBiF}
  B_i(F)=\int_{\calT_i}
    \frac{L_i(F;\boldsymbol\alpha)^2}
         {\alpha_1\cdots\alpha_i}\,
       d\alpha_2\cdots d\alpha_i,
  \qquad B_1(F)=F(1)^2,
\end{equation}
where \(\alpha_1=1-\alpha_2-\cdots-\alpha_i\).
The integrable majorant~\eqref{eq:low-integrable-majorant}, valid for
every fixed \(i\), proves that these integrals are finite.

For the sieve mass with exactly \(i\) prime factors, without the
pre-sieving restriction, put
\[
  \mathcal M_i(X)=\sum_{\om(n)=i}
    g\!\left(\frac nX\right)\mu^2(n)W(n).
\]

\begin{lemma}\label{lem:fixed-layer-mass}
For \(F(x)=x^2\) and \(1\leq i\leq4\),
\begin{equation}\label{eq:fixed-layer-mass}
  \mathcal M_i(X)=\bigl(B_i(F)+o(1)\bigr)\frac{X}{\log X}.
\end{equation}
\end{lemma}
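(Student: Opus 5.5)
We evaluate \(\mathcal M_i(X)\) by summing over ordered prime tuples \(p_1>\cdots>p_i\) with \(n=p_1\cdots p_i\) in the support of \(g(\cdot/X)\). Exactly as in the proof of Lemma~\ref{lem:freeze-weight}, the divisor expansion gives
\[
  W(p_1\cdots p_i)=L_i\bigl(F;(\log p_j/\calL)_j\bigr)^2+O\!\left(\frac{\log\calL}{\calL}\right),
\]
because \(\log R=\calL/4-B\log\calL\) and the zero extension of \(F\) is Lipschitz; see~\eqref{eq:WandL2}. The summand is therefore a bounded continuous function of the logarithmic coordinates \(\alpha_j=\log p_j/\calL\).

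The case \(i=1\) is immediate. Here \(W(p)=(F(1)-F(1-\log p/\log R))^2\), and for \(p>R\) this equals \(F(1)^2=1\). The primes \(p\leq R\) contribute \(O(R)\). The prime number theorem gives \(\sum_p g(p/X)=(1+o(1))X/\calL\) since \(\widetilde g(1)=1\), which proves the claim with \(B_1(F)=F(1)^2\).

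For \(2\leq i\leq4\), fix a small \(\eta>0\) and split the tuples according to whether \(\alpha_i\geq\eta\). On the part with \(\alpha_i\geq\eta\), we replace the prime sums by integrals exactly as in Lemma~\ref{lem:regular-boxes}, applying the prime number theorem successively in each coordinate. We then change variables as in~\eqref{eq:low-log-jacobian}. This gives
\[
  \frac{X}{\calL}\int_{\calT_i\cap\{\alpha_i\geq\eta\}}\frac{L_i(F;\boldsymbol\alpha)^2}{\alpha_1\cdots\alpha_i}\,d\alpha_2\cdots d\alpha_i+o_\eta(X/\calL).
\]
The integrand is bounded and Lipschitz on this compact region, and the coincidences \(p_j=p_k\) contribute negligibly. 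We may equally use the box method of Section~\ref{sec:low}, freezing \(W\) and \(g\) on boxes of logarithmic width \(\calL^{-2}\).

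The main obstacle is the tail where the smallest prime satisfies \(p_i<X^\eta\). Here the integrand \(1/\alpha_i\) is not integrable by itself, and the prime-sum-to-integral replacement is no longer uniform. The plan is to bound this tail pointwise using the majorant from~\eqref{eq:low-integrable-majorant}. Pairing subsets with and without the index \(i\) gives \(|L_i|\ll\alpha_i\), so \(W(n)\ll(\log p_i/\calL)^2\) uniformly; this uses \(F'\) bounded. The tail is then at most
\[
  \ll\sum_{p_i<X^\eta}\frac{\log^2p_i}{\calL^2}\sum_{\substack{m\sim X/p_i\\ \om(m)=i-1}}1 \ll\sum_{p<X^\eta}\frac{\log^2p}{\calL^2}\cdot\frac{X}{p\,\calL}(\log\calL)^{i-2}.
\]
Here the inner count is an upper bound for \((i-1)\)-almost primes of size about \(X/p\) whose prime factors are all at least \(p\). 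To avoid the \(\log\calL\) loss, I would instead bound this inner count directly by a product-of-primes count, which is \(O_i(X/(p\calL))\) since all other prime factors exceed \(p\) and the logarithmic coordinates lie in a compact set. Then \(\sum_{p<X^\eta}\log^2p/p\ll\eta^2\calL^2\) makes the tail \(O(\eta^2X/\calL)\). The corresponding integral over \(\alpha_i<\eta\) is likewise \(O(\eta^2)\) by~\eqref{eq:low-integrable-majorant}. Letting \(X\to\infty\) and then \(\eta\to0\) proves~\eqref{eq:fixed-layer-mass}.
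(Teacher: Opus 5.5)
Your treatment of \(i=1\) and of the range \(p_i\geq X^\eta\) is essentially the paper's. For the tail \(p_i<X^\eta\) you take a genuinely different route.

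\begin{itemize}
\item The paper bounds this tail by Lemma~\ref{lem:small-prime}, after discarding the factor \(2^{\om(n)}\geq1\). That lemma rests on the uniform shifted-weight asymptotic from Part~I.
\item You instead use the pointwise bound \(W(p_1\cdots p_i)\ll_i(\log p_i/\calL)^2\), obtained by pairing subsets with and without the index \(i\), and then count elementarily.
\end{itemize}

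The pointwise bound costs only a constant here because \(\om(n)=i\) is fixed, so there are just \(2^{i-1}\) pairs. In Lemma~\ref{lem:small-prime}, \(\om(n)\) is unbounded and weighted by \(2^{\om(n)}\), so the same device would lose powers of \(\log X\); that is why the paper needs Part~I there. Your version makes this lemma self-contained, while the paper's is shorter given what is already proved.

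One step is false as written and needs repair. The number of \(m\in[X/p,2X/p]\) that are products of \(i-1\) primes exceeding \(p\) is not \(O_i(X/(p\calL))\) uniformly in \(p<X^\eta\). The coordinates \(\log q/\calL\) lie only in \([\log p/\calL,1]\), which is not bounded away from \(0\) when \(p\) is small. For example, with \(p=2\) and \(i=4\) the count is \(\asymp X(\log\calL)^2/\calL\).

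Fix the intermediate primes and count the largest one, whose logarithm is \(\asymp\calL\). This gives the correct bound
\[
  \ll_i\frac{X}{p\calL}\Bigl(1+\log\frac{\calL}{\log p}\Bigr)^{i-2}.
\]
This still suffices, because the factor \((\log p/\calL)^2\) absorbs the extra logarithm. Splitting \(\log p\) dyadically and applying Mertens' theorem gives a tail \(\ll\eta^2(1+\log(1/\eta))^{i-2}X/\calL\), uniformly for large \(X\).

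Similarly, the integral over \(\alpha_i<\eta\) is \(O(\eta^2\log^{i-2}(1/\eta))\), not \(O(\eta^2)\). Alternatively, you can simply invoke dominated convergence, as the paper does. With these corrections your argument goes through.
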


\begin{proof}
For \(i=1\), every contributing prime exceeds \(R\), so that
\(W(p)=F(1)^2\); the assertion follows from the prime number theorem.
Let \(2\leq i\leq4\), and write the prime factors in decreasing order,
\(n=p_1\cdots p_i\).  For fixed \(0<\eta<1/i\), restrict initially
to \(p_i\geq X^\eta\), and denote the resulting mass by
\(\mathcal M_i^{(\eta)}(X)\).  Set
\[
  u=\frac nX,\qquad
  \alpha_j=\frac{\log p_j}{\log X}\quad(2\leq j\leq i),
  \qquad \alpha_1=1-\sum_{j=2}^i\alpha_j.
\]
Then \(p_1=uX^{\alpha_1}\) and \(p_j=X^{\alpha_j}\) for \(j\geq2\).
On \(1\leq u\leq2\), the ordering condition \(p_1>p_2\) differs
from \(\alpha_1>\alpha_2\) by \(O(1/\log X)\), and
\(\log p_1/\log X=\alpha_1+O(1/\log X)\).
For fixed \(\eta\), all coordinates are bounded away from zero.
The prime number theorem and partial summation therefore show that
the boundary strips and these changes in logarithmic denominators
contribute \(o_\eta(X/\log X)\).

Writing the divisor sum as the finite sum over all subsets, with the
zero extension of \(F\), also gives uniformly on this region
\[
  W(p_1\cdots p_i)
   =L_i(F;\boldsymbol\alpha)^2+o_\eta(1).
\]
This uses \(\log X/\log R=4+o(1)\) and remains valid across the
subset cutoff hyperplanes.  The same successive prime summation as
in Section~\ref{sec:low}, or directly the change of variables above,
now gives
\[
  \mathcal M_i^{(\eta)}(X)
  =\left(
    \int_{\substack{\boldsymbol\alpha\in\calT_i\\\alpha_i\geq\eta}}
      \frac{L_i(F;\boldsymbol\alpha)^2}
           {\alpha_1\cdots\alpha_i}\,
      d\alpha_2\cdots d\alpha_i+o_\eta(1)
    \right)\frac{X}{\log X}.
\]
In this change of variables the prime-density measure is
\[
  \frac{X}{\log X}
  \frac{du\,d\alpha_2\cdots d\alpha_i}{\alpha_1\cdots\alpha_i},
\]
up to the uniformly vanishing denominator error already described,
and \(\int g(u)\,du=1\).

To remove the restriction on the least prime factor, apply
Lemma~\ref{lem:small-prime} and use \(2^{\om(n)}\geq1\).  For
\(0<\eta\leq1/12\), this gives
\begin{equation}\label{eq:small-prime-unweighted}
  0\leq \mathcal M_i(X)-\mathcal M_i^{(\eta)}(X)
  \ll\left(\eta+\frac{\log\log X}{\log X}\right)
      \frac{X}{\log X}.
\end{equation}
The integrable majorant~\eqref{eq:low-integrable-majorant} permits dominated
convergence on the integral side.  Letting first \(X\to\infty\)
and then \(\eta\to0^+\) proves~\eqref{eq:fixed-layer-mass}.
\end{proof}

\subsection{The numerical separation}

Write \(A_i=A_i(x^2)\) and \(B_i=B_i(x^2)\).
The layer \(\om(n)=0\) is absent on the support of \(g(n/X)\)
for large \(X\).  Thus Lemmas~\ref{lem:total-sieve-mass}
and~\ref{lem:fixed-layer-mass} give
\begin{equation}\label{eq:tail-asymptotic}
  2\sum_{\om(n)\geq5}g\!\left(\frac nX\right)\mu^2(n)W(n)
  \leq\bigl(\mathfrak T+o(1)\bigr)\frac{X}{\log X},
\end{equation}
where
\begin{equation}\label{eq:tail-constant}
  \mathfrak T=2\left(\frac{16}{3}-\sum_{i=1}^4B_i\right).
\end{equation}
Here the sum over the fixed layers is subtracted from an upper
bound for the total mass; no asymptotic formula for the total
square-free mass is required.

Here \(B_1=1\). Numerical evaluation gives
\[
\begin{aligned}
  A_2&>0.28768, & A_3&>0.874,\\
  B_2&>2.17693, & B_3&>1.566, & B_4&>0.45.
\end{aligned}
\]

Substituting these bounds into Proposition~\ref{prop:low} gives
\begin{equation}\label{eq:low-numerical}
  H_{\leq4}(X)
  \geq\bigl(0.3765389248-\delta_2(\eps)-o(1)\bigr)
        \frac{X}{\log X}.
\end{equation}
On the other hand,
\begin{equation}\label{eq:tail-numerical}
  \mathfrak T<0.28081.
\end{equation}
Combining~\eqref{eq:pair-transfer}, \eqref{eq:tail-asymptotic},
and~\eqref{eq:tail-numerical}, we obtain
\begin{equation}\label{eq:high-numerical}
  H_{\geq5}(X)
  \leq\bigl(0.28081+o_\eps(1)\bigr)\frac{X}{\log X}.
\end{equation}
The bounds \eqref{eq:low-numerical} and
\eqref{eq:high-numerical} therefore give the following separation.

\begin{proposition}\label{prop:mass-gap}
For every sufficiently small fixed \(\eps>0\),
\[
  H_{\leq4}(X)-H_{\geq5}(X)
  \geq\bigl(0.0957-\delta_2(\eps)-o_\eps(1)\bigr)
       \frac{X}{\log X}.
\]
\end{proposition}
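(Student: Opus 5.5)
The separation follows by subtracting the upper bound \eqref{eq:high-numerical} for \(H_{\geq5}(X)\) from the lower bound \eqref{eq:low-numerical} for \(H_{\leq4}(X)\). Both are already established in this subsection, so the only work is to combine them and track the numerical constants and error terms.

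First, for fixed small \(\eps\), Proposition~\ref{prop:low} with \(F(x)=x^2\) (which satisfies \(F(0)=F'(0)=0\)), together with the numerical values \(A_2>0.28768\) and \(A_3>0.874\), gives
\[
4C_2A_2+8C_3A_3>4(0.11109)(0.28768)+8(0.03557)(0.874)\approx0.12783+0.24871.
\]
This yields \eqref{eq:low-numerical} with error \(\delta_2(\eps)+o(1)\).

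Second, Proposition~\ref{prop:pair-transfer}, combined with \eqref{eq:tail-asymptotic} and \eqref{eq:tail-constant}, bounds \(H_{\geq5}(X)\) by \((\mathfrak T+o_\eps(1))X/\log X\). Here Lemma~\ref{lem:total-sieve-mass} supplies the total mass \(16/3\). Lemma~\ref{lem:fixed-layer-mass} supplies the layers \(i\leq4\), with \(B_1=F(1)^2=1\) and the stated numerical lower bounds \(B_2,B_3,B_4\). Since these layers are subtracted, lower bounds on \(B_i\) give upper bounds on the tail:
\[
\mathfrak T<2\left(\tfrac{16}{3}-1-2.17693-1.566-0.45\right)\approx0.28081.
\]

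Third, subtracting gives \(0.3765389-0.28081\approx0.09573>0.0957\). The errors combine into \(\delta_2(\eps)+o_\eps(1)\), since the \(o(1)\) in \eqref{eq:low-numerical} depends only on \(\eps\) and the fixed parameters.

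Everything here is routine arithmetic except the certified numerical values of the integrals \(A_2,A_3,B_2,B_3,B_4\). These are taken as given from the preceding evaluation; the substantive inputs were proved earlier.
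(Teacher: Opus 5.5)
Your proposal is correct and is the paper's argument: the proposition is obtained by subtracting \eqref{eq:high-numerical} from \eqref{eq:low-numerical}. Your arithmetic checks out, since \(4C_2A_2+8C_3A_3>0.3765389\) and \(\mathfrak T<2(16/3-5.19293)\approx0.2808067<0.28081\) leave a gap just above \(0.0957\), and the errors combine to \(\delta_2(\eps)+o_\eps(1)\).
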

\section{Completion of the proof}\label{sec:proof}

\begin{proof}[Proof of Theorem~\ref{thm:main}]
Choose \(\eps>0\) sufficiently small that
\[
  \delta_1(\eps)+\delta_2(\eps)<\frac14(0.0957).
\]
Fix this value of \(\eps\) and let \(X\to\infty\).
Propositions~\ref{prop:signed} and~\ref{prop:mass-gap}, together with
Lemma~\ref{lem:mass}, then give
\[
\begin{aligned}
  M_\sigma(X)
  &\geq
  \frac12\left(
    0.0957-\delta_1(\eps)-\delta_2(\eps)-o(1)
  \right)\frac{X}{\log X},\\
  M_\sigma(X)
  &\gg\frac{X}{\log X}
  \qquad(\sigma=\pm1).
\end{aligned}
\]

For every \(n\) contributing to \(M_\sigma(X)\), squarefreeness and
\(\om(n)\leq4\) imply, by the Weil bound, that
\(\abs{\Kl(1;n)}\leq16\).  Moreover,
\(\abs{\lambda_d}\leq1\), and hence
\[
  W(n)
  \leq
  \left(\sum_{d\mid n}1\right)^2
  =2^{2\om(n)}
  \leq256.
\]
Each summand in \(M_\sigma(X)\) is therefore \(O_g(1)\).  Dividing the mass estimate by this uniform bound and
then dropping the pre-sieving restriction yields
\[
  \#\left\{
    X<n\leq2X:
    \mu^2(n)=1,\ \om(n)\leq4,\
    \sigma\Kl(1;n)>0
  \right\}
  \gg\frac{X}{\log X}.
\]
This proves the theorem.
\end{proof}

\section*{Acknowledgments}
The authors thank Igor E. Shparlinski and Ping Xi for helpful discussions. This work was supported by the National Natural Science Foundation of China (Grant Nos.~12171311 and~12671012). Y.X. acknowledges financial support from the China Scholarship Council (CSC), as well as the support and hospitality of the School of Mathematics and Statistics at UNSW Sydney through its PhD Support Scheme.

\end{document}